\definecolor{shadecolor}{gray}{0.875}
\definecolor{dblue}{rgb}{0,0,.6}
\newcommand{\mathds}[1]{{\mathbb #1}}
\numberwithin{equation}{section}
\begin{document}
%
%
%
\theoremstyle{definition}
\newtheorem{Definition}{Definition}[section]
\newtheorem*{Definitionx}{Definition}
\newtheorem{Convention}{Definition}[section]
\newtheorem{Construction}[Definition]{Construction}
\newtheorem{Example}[Definition]{Example}
\newtheorem{Exercise}[Definition]{Exercise}
\newtheorem{Examples}[Definition]{Examples}
\newtheorem{Remark}[Definition]{Remark}
\newtheorem*{Remarkx}{Remark}
\newtheorem{Remarks}[Definition]{Remarks}
\newtheorem{Caution}[Definition]{Caution}
\newtheorem{Conjecture}[Definition]{Conjecture}
\newtheorem*{Conjecturex}{Conjecture}
\newtheorem{Question}[Definition]{Question}
\newtheorem*{Questionx}{Question}
\newtheorem*{Acknowledgements}{Acknowledgements}
\newtheorem*{Notation}{Notation}
\newtheorem*{Organization}{Organization}
\newtheorem*{Disclaimer}{Disclaimer}
\theoremstyle{plain}
\newtheorem{Theorem}[Definition]{Theorem}
\newtheorem*{Theoremx}{Theorem}

\newtheorem{Proposition}[Definition]{Proposition}
\newtheorem*{Propositionx}{Proposition}
\newtheorem{Lemma}[Definition]{Lemma}
\newtheorem{Claim}[Definition]{Claim}
\newtheorem*{ClaimA}{Claim A}
\newtheorem{Corollary}[Definition]{Corollary}
\newtheorem*{Corollaryx}{Corollary}
\newtheorem{Fact}[Definition]{Fact}
\newtheorem{Facts}[Definition]{Facts}
\newtheoremstyle{voiditstyle}{3pt}{3pt}{\itshape}{\parindent}%
{\bfseries}{.}{ }{\thmnote{#3}}%
\theoremstyle{voiditstyle}
\newtheorem*{VoidItalic}{}
\newtheoremstyle{voidromstyle}{3pt}{3pt}{\rm}{\parindent}%
{\bfseries}{.}{ }{\thmnote{#3}}%
\theoremstyle{voidromstyle}
\newtheorem*{VoidRoman}{}

\newenvironment{specialproof}[1][\proofname]{\noindent\textit{#1.} }{\qed\medskip}
\newcommand{\blowup}{\rule[-3mm]{0mm}{0mm}}
\newcommand{\cal}{\mathcal}
\newcommand{\Aff}{{\mathds{A}}}
\newcommand{\BB}{{\mathds{B}}}
\newcommand{\CC}{{\mathds{C}}}
\newcommand{\EE}{{\mathds{E}}}
\newcommand{\FF}{{\mathds{F}}}
\newcommand{\GG}{{\mathds{G}}}
\newcommand{\HH}{{\mathds{H}}}
\newcommand{\NN}{{\mathds{N}}}
\newcommand{\ZZ}{{\mathds{Z}}}
\newcommand{\PP}{{\mathds{P}}}
\newcommand{\QQ}{{\mathds{Q}}}
\newcommand{\RR}{{\mathds{R}}}
\newcommand{\BA}{{\mathds{A}}}
\newcommand{\Liea}{{\mathfrak a}}
\newcommand{\Lieb}{{\mathfrak b}}
\newcommand{\Lieg}{{\mathfrak g}}
\newcommand{\Liem}{{\mathfrak m}}
\newcommand{\ideala}{{\mathfrak a}}
\newcommand{\idealb}{{\mathfrak b}}
\newcommand{\idealg}{{\mathfrak g}}
\newcommand{\idealm}{{\mathfrak m}}
\newcommand{\idealp}{{\mathfrak p}}
\newcommand{\idealq}{{\mathfrak q}}
\newcommand{\idealI}{{\cal I}}
\newcommand{\lin}{\sim}
\newcommand{\num}{\equiv}
\newcommand{\dual}{\ast}
\newcommand{\iso}{\cong}
\newcommand{\homeo}{\approx}
\newcommand{\mm}{{\mathfrak m}}
\newcommand{\pp}{{\mathfrak p}}
\newcommand{\qq}{{\mathfrak q}}
\newcommand{\rr}{{\mathfrak r}}
\newcommand{\pP}{{\mathfrak P}}
\newcommand{\qQ}{{\mathfrak Q}}
\newcommand{\rR}{{\mathfrak R}}
\newcommand{\OO}{{\cal O}}
\newcommand{\numero}{{n$^{\rm o}\:$}}
\newcommand{\mf}[1]{\mathfrak{#1}}
\newcommand{\mc}[1]{\mathcal{#1}}
\newcommand{\into}{{\hookrightarrow}}
\newcommand{\onto}{{\twoheadrightarrow}}
\newcommand{\Spec}{{\rm Spec}\:}
\newcommand{\BigSpec}{{\rm\bf Spec}\:}
\newcommand{\Spf}{{\rm Spf}\:}
\newcommand{\Proj}{{\rm Proj}\:}
\newcommand{\Pic}{{\rm Pic }}
\newcommand{\MW}{{\rm MW }}
\newcommand{\Br}{{\rm Br}}
\newcommand{\NS}{{\rm NS}}
\newcommand{\Sym}{{\mathfrak S}}
\newcommand{\Aut}{{\rm Aut}}
\newcommand{\Autp}{{\rm Aut}^p}
\newcommand{\ord}{{\rm ord}}
\newcommand{\coker}{{\rm coker}\,}
\newcommand{\divisor}{{\rm div}}
\newcommand{\Def}{{\rm Def}}
\newcommand{\rank}{\mathop{\mathrm{rank}}\nolimits}
\newcommand{\Ext}{\mathop{\mathrm{Ext}}\nolimits}
\newcommand{\EXT}{\mathop{\mathscr{E}{\kern -2pt {xt}}}\nolimits}
\newcommand{\Hom}{\mathop{\mathrm{Hom}}\nolimits}
\newcommand{\Bk}{\mathop{\mathrm{Bk}}\nolimits}
\newcommand{\HOM}{\mathop{\mathscr{H}{\kern -3pt {om}}}\nolimits}
\newcommand{\calA}{\mathscr{A}}
\newcommand{\calC}{\mathscr{C}}
\newcommand{\calH}{\mathscr{H}}
\newcommand{\calL}{\mathscr{L}}
\newcommand{\calM}{\mathscr{M}}
\newcommand{\CM}{\mathcal{M}}
\newcommand{\calN}{\mathscr{N}}
\newcommand{\calX}{\mathscr{X}}
\newcommand{\calK}{\mathscr{K}}
\newcommand{\calD}{\mathscr{D}}
\newcommand{\calY}{\mathscr{Y}}
\newcommand{\calF}{\mathscr{F}}
\newcommand{\CN}{\mathcal{N}}
\newcommand{\DD}{\mathcal{D}}
\newcommand{\CCC}{\mathcal{C}}

\newcommand{\chari}{\mathop{\mathrm{char}}\nolimits}
\newcommand{\ch}{\mathop{\mathrm{ch}}\nolimits}
\newcommand{\CH}{\mathop{\mathrm{CH}}\nolimits}
\newcommand{\supp}{\mathop{\mathrm{supp}}\nolimits}
\newcommand{\codim}{\mathop{\mathrm{codim}}\nolimits}
\newcommand{\td}{\mathop{\mathrm{td}}\nolimits}
\newcommand{\Span}{\mathop{\mathrm{Span}}\nolimits}
\newcommand{\Gal}{\mathop{\mathrm{Gal}}\nolimits}
\newcommand{\sym}{\mathop{\mathrm{Sym}}\nolimits}
\newcommand{\DIV}{\mathop{\mathrm{div}}\nolimits}
\newcommand{\Gr}{\mathop{\mathrm{Gr}}\nolimits}
\newcommand{\cl}{\mathop{\mathrm{cl}}\nolimits}
\newcommand{\wcl}{\mathop{\widetilde{\mathrm{cl}}}\nolimits}
\newcommand{\piet}{{\pi_1^{\rm \acute{e}t}}}
\newcommand{\Het}[1]{{H_{\rm \acute{e}t}^{{#1}}}}
\newcommand{\Hfl}[1]{{H_{\rm fl}^{{#1}}}}
\newcommand{\Hcris}[1]{{H_{\rm cris}^{{#1}}}}
\newcommand{\HdR}[1]{{H_{\rm dR}^{{#1}}}}
\newcommand{\hdR}[1]{{h_{\rm dR}^{{#1}}}}
\newcommand{\loc}{{\rm loc}}
\newcommand{\et}{{\rm \acute{e}t}}
\newcommand{\defin}[1]{{\bf #1}}

\newcommand{\ol}[1]{{\overline{#1}}}

\ifthenelse{\equal{1}{1}}{
\ifthenelse{\equal{2}{2}}{
\newcommand{\blue}[1]{{\color{blue}#1}}
\newcommand{\green}[1]{{\color{green}#1}}
\newcommand{\red}[1]{{\color{red}#1}}
\newcommand{\cyan}[1]{{\color{cyan}#1}}
\newcommand{\magenta}[1]{{\color{magenta}#1}}
\newcommand{\yellow}[1]{{\color{yellow}#1}} 
}{
\newcommand{\blue}[1]{#1}
\newcommand{\green}[1]{#1}
\newcommand{\red}[1]{#1}
\newcommand{\cyan}[1]{#1}
\newcommand{\magenta}[1]{#1}
\newcommand{\yellow}[1]{#1} 
}
}{
\newcommand{\blue}[1]{}
\newcommand{\green}[1]{}
\newcommand{\red}[1]{}
\newcommand{\cyan}[1]{}
\newcommand{\magenta}[1]{}
\newcommand{\yellow}[1]{} 
}

\renewcommand{\HH}{{\rm{H}}}

\title[Real Regulators for Products]{Real Regulators for Products of Elliptic Curves}

\author{Xi Chen}
\address{632 Central Academic Building\\
University of Alberta\\
Edmonton, Alberta T6G 2G1, CANADA}
\email{xichen@math.ualberta.ca}

\author{James D. Lewis}
\address{632 Central Academic Building\\ University of Alberta\\ Edmonton, Alberta T6G 2G1, CANADA}
\email{lewisjd@ualberta.ca}

\thanks{Both authors partially supported
by grants from the Natural Sciences and Engineering
Research Council of Canada}

\date{\today}

\keywords{Higher Chow group, real regulator, algebraic cycle}
\subjclass{Primary 14C25; Secondary 14F43, 14J28, 14K30}
\begin{abstract}
Assuming the K\"unneth decomposition of the Chow groups of products of very general Kummer surfaces, we prove that the Hodge-$\calD$-conjecture fails for the real regulator $r_{k,1}$ on a product of $n$ very general elliptic curves for $2n\ge 3k-1\ge 8$.
\end{abstract}

\maketitle

\section{Introduction}

Let $X$ be a projective variety and let $(k,m)$ be a pair of integers. The higher Chow groups $\CH^{k}(X,m)$ were introduced by S. Bloch \cite{Bloch1986AlgebraicCA}.
For the purpose of this paper, let us give a quick definition of $\CH^{k}(X,1)$ as follows
$$
\begin{aligned}
Z^k(X,1) &= \Big\{
\sum_{j}(f_{j},Z_{j})\ : \text{cd}_{X}Z_{j} = k-1,\ f_{j}\in \CC(Z_{j})^{\times}\Big\}\\
\CH^{k}(X,1) &= \frac{\big\{ \xi = \sum (f_j, Z_j)\in Z^k(X,1): \DIV(\xi) = \sum \DIV(f_{j}) = 0\big\} }{\text{Image(Tame symbol)}}.
\end{aligned}
$$
where $Z_j$ are irreducible subvarieties of $X$ of codimension $k-1$, $\CC(Z_j)$ is the space of rational functions on $Z_j$ and $\DIV(f_j)$ is the divisor on $Z_j$ defined by $f_j$. We will not explain the Tame symbol since it is not needed in this paper.

If $X$ is smooth, then similar to the cycle maps $\CH^k(X)\to H^{2k}(X)$ on Chow groups, there are maps, called {\em regulators}, from the higher Chow groups of $X$ to its Deligne cohomologies (see, for example, \cite{kerr_lewis_muller-stach_2006, K-L}). Again, for our purpose, we just need the {\em real regulator} map 
$$
\begin{tikzcd}
\CH_\RR^k(X, 1) \ar[equal]{d} \ar{r}{r_{k,1}} & H^{k-1,k-1}(X, \RR) \ar[equal]{d} \\
\CH^k(X,1)\otimes \RR & (H^{n-k+1, n-k+1}(X, \RR))^\vee
\end{tikzcd}
$$
defined on $\CH^k(X,1)$ for a smooth projective variety $X$ of dimension $n$, which is explicitly given by
$$
r_{k,1}(\xi)(\omega) = \sum_j \int_{Z_j} \log|f_j| \omega
$$
for $\xi\in \CH^k(X,1)$ represented by $\xi = \sum (f_j, Z_i)\in Z^k(X,1)$ satisfying $\DIV(\xi) = 0$.

The Hodge-$\calD$-conjecture states that this map is surjective. It is expected to be true for varieties over $\overline{\QQ}$. For surfaces over $\CC$, it is known to be true for rational surfaces and general Abelian and K3 surfaces \cite{HodgeDK3}. It fails for very general surfaces in $\PP^3$ of degree $\ge 5$ \cite{SMS1995}.

Let us consider the real regulator for a product of elliptic curves.

\begin{Conjecture}\label{HDPECONJPE}
For $n$ very general complex elliptic curves $E_1,E_2,...,E_n$ and $X = E_1\times E_2\times ...\times E_n$, the real regulator map
$r_{k,1}$ on $X$ is surjective for $k = 2$ and ``trivial'' (explained below) for all $2n\ge 3k-1\ge 8$.
The triviality of $r_{k,1}$ is measured by whether its image is orthogonal to one of the subspaces
\begin{equation}\label{HDPEE803}
T_m(H^{2n-2k+2}(X)) = \sum_{\substack{|I|=m\\ I\subset \{1,2,...,n\}}} \bigotimes_{i\in I} H^1(E_i)
\otimes 
H^{2n - 2k + 2 - m}(\prod_{j\not\in I} E_j)
\end{equation}
of $H^{2n-2k+2}(X)$ for some $1\le m\le n$. We expect that
\begin{equation}\label{HDPEE800}
r_{k,1}(\CH_\RR^{k}(X, 1))
\subset T_{2r+2}(H^{2n - 2k + 2}(X))^\perp
\end{equation}
for all $3\le k\le 2r + 1$. For example, when $(k,r,n) = (3,1,4)$, we expect that
$$
r_{3,1}(\CH_\RR^{3}(X, 1))
\subset T_4(H^{4}(X))^\perp = (H^1(E_1)\otimes H^1(E_2)\otimes H^1(E_3)\otimes H^1(E_4))^\perp.
$$
\end{Conjecture}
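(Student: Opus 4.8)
The plan is to establish the containment \eqref{HDPEE800} by reducing it, through the Chow--K\"unneth decomposition, to a vanishing statement for the projection of each higher cycle onto a single transcendental K\"unneth component, and then disposing of that component by the decomposable/indecomposable dichotomy together with the assumed K\"unneth decomposition for products of Kummer surfaces. \textit{Step 1 (reduction to one component).} Since $X=E_1\times\cdots\times E_n$ is an abelian variety, the Chow--K\"unneth projectors onto the summands $\bigotimes_i H^{a_i}(E_i)$ are represented by algebraic correspondences (Deninger--Murre/Shermenev) and therefore act on $\CH^k(X,1)$. The endomorphism $[N]_i$ (multiplication by $N$ on the $i$-th factor) satisfies $[N]_i^*=N^{a_i}$ on the component of $i$-th degree $a_i$, and the regulator is compatible with correspondences, $r_{k,1}([N]_i^*\xi)(\omega)=r_{k,1}(\xi)([N]_{i,*}\omega)$, as one checks directly from the integral formula via the projection formula. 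Polynomial combinations of the $[N]_i$ realize the projectors $\pi_I$ onto $\bigotimes_{i\in I}H^1(E_i)\otimes H^{\bullet}(\prod_{j\notin I}E_j)$, giving
\[
r_{k,1}(\xi)(\pi_{I,*}\omega)=r_{k,1}(\pi_I^*\xi)(\omega).
\]
Because $T_{2r+2}$ is the sum of the images of the $\pi_I$ with $|I|=2r+2$, it suffices to prove $r_{k,1}(\pi_I^*\xi)=0$ on this component for each such $I$.

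\textit{Step 2 (decomposable cycles).} I would write $\pi_I^*\xi$ as a decomposable part (the image of $\CH^{k-1}(X)\otimes\CC^\times$) plus an indecomposable part. For a decomposable generator $(\alpha,Z)$ one has $r_{k,1}(\alpha,Z)(\omega)=\log|\alpha|\,\langle[Z],\omega\rangle$, and for $X$ very general the Hodge classes $[Z]\in H^{2k-2}(X)$ lie in the subring generated by the factor classes $H^2(E_i)$ (here one invokes $\Hom(E_i,E_j)=0$ for $i\ne j$ and $\NS(E_i\times E_j)\otimes\QQ$ of rank $2$). Such a class carries no $H^1$-component in any factor, so it cannot pair nontrivially with a class that has an $H^1(E_i)$ for some $i\in I$; hence the decomposable part is orthogonal to $T_{2r+2}$.

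\textit{Step 3 (indecomposable cycles and Kummer surfaces).} It then remains to treat the indecomposable projection. The target component carries $2r+2=2(r+1)$ transcendental factors $H^1(E_i)$; grouping these into $r+1$ pairs identifies the component with a sub-Hodge-structure of the degree-$2(r+1)$ cohomology of a product of $r+1$ Kummer surfaces $\mathrm{Km}(E_i\times E_{i'})$ (tensored with trivial factors from $j\notin I$), via $h^1(E_i)\otimes h^1(E_{i'})\subset h^2(\mathrm{Km}(E_i\times E_{i'}))$. Under the assumed K\"unneth decomposition of the Chow groups of products of very general Kummer surfaces, the indecomposable higher cycle projected to this component is forced to be decomposable, so by Step 2 its regulator vanishes there. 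The numerical hypotheses $3\le k\le 2r+1$ and $2n\ge 3k-1$ are precisely what keep the relevant group inside the range where the Kummer decomposition applies: heuristically, a cycle supported on a subvariety of codimension $k-1$ can ``see'' at most $k-1<2r+2$ independent transcendental $H^1$-factors.

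\textbf{Main obstacle.} The technical heart is Step 3: making precise the passage from the \emph{higher} Chow group $\CH^k(X,1)$ to the \emph{ordinary} Chow groups of the auxiliary products of Kummer surfaces to which the hypothesis applies, and then showing that the indecomposable classes die in the transcendental range $m=2r+2$. This demands careful bookkeeping with the projectors $\pi_I$ together with the exact dimension count certifying that $k\le 2r+1$ forbids reaching $2r+2$ transcendental factors; verifying the resulting vanishing of the indecomposable contribution is where I expect the bulk of the work to lie.
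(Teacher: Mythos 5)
First, a framing point: the statement you are proving is stated in the paper as a \emph{Conjecture}; what the paper actually establishes is the conditional Theorem \ref{HDPETHMMAIN}, namely that \eqref{HDPEE800} follows from the surjectivity hypothesis \eqref{HDPEE801} on ordinary Chow groups of products of Kummer surfaces. Since your proposal is likewise conditional on that K\"unneth-type hypothesis, it should be measured against the paper's proof of Theorem \ref{HDPETHMMAIN}.

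There is a genuine gap, and it sits exactly where you yourself locate ``the bulk of the work'': Step 3. The hypothesis \eqref{HDPEE801} concerns the groups $\CH_\QQ^{k-1}$ of products of Kummer surfaces; it says nothing directly about higher Chow groups $\CH^k(\cdot,1)$, and in particular it does not ``force the indecomposable projection to be decomposable'' on a fixed very general $X$. No bookkeeping with the Chow--K\"unneth projectors $\pi_I$ converts a statement about $\CH^{k-1}$ into a vanishing statement for $r_{k,1}$ on a single fiber. The paper's mechanism is entirely different and is the real content of the argument: one first transfers the problem from $\prod E_i$ to a product of Kummer surfaces via the degree-$2$ quotient and its resolution (Proposition \ref{HDPEPROP900} --- your grouping of the $H^1(E_i)$ into pairs is the shadow of this step); one then spreads the higher Chow cycle over a one-parameter \emph{degenerating} family of Kummer surfaces whose special fibers are unions of smooth rational surfaces; the hypothesis \eqref{HDPEE801} is used only to control the boundary divisor of a completion of the precycle across those special fibers, so that the class extends over the total space after correction by precycles of explicitly harmless shape (Theorem \ref{HDPETHM000}); and finally a Gauss--Manin/monodromy argument on the extended class, exploiting that the family of Kummer surfaces has maximal moduli, kills the pairing with the transcendental component. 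Your proposal contains no family, no degeneration and no monodromy, so the indecomposable contribution is simply not handled. (Steps 1 and 2 are fine as far as they go --- the decomposable part is indeed orthogonal to $T_{2r+2}$ for very general $E_i$ --- but they only dispose of the easy part. Also, your heuristic that a codimension-$(k-1)$ support ``sees at most $k-1<2r+2$ transcendental factors'' is not where the inequality $k\le 2r+1$ enters the paper: there it is the codimension count forcing the residual boundary term $R_0$ to lie in $\bigotimes_a Z_\QQ^2(Y_a)\otimes Z_\QQ^{k-2r-1}(W_B)$, hence to vanish or be pulled back from the base.)
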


It is easy to show that $r_{k,1}$ is surjective if and only if
\begin{equation}\label{HDPEE000}
\begin{aligned}
& \bigotimes_{i=1}^{2l} H^1(E_{a_i}) \cap H^{l,l}(E_{a_1}\times E_{a_2}\times ... \times E_{a_{2l}}, \RR)
\\
&\subset r_{l+1,1}(\CH_\RR^{l+1}(E_{a_1}\times E_{a_2}\times ... \times E_{a_{2l}}, 1))
\end{aligned}
\end{equation}
for all $1\le l \le k-1$ and $1\le a_1 < a_2 < ... < a_{2l} \le n$. This holds for $k=2$ \cite{HodgeDK3}. So the question here is whether $r_{k,1}$ is trivial for $3\le k \le n-1$.

In this paper, we will reduce the question regarding regulators on products of elliptic curves to those on products of Kummer surfaces. Here a {\em Kummer surface} is the minimal resolution of $E_1\times E_2/\pm 1$ for a product of elliptic curves $E_1$ and $E_2$. Roughly, if we assume a K\"unneth decomposition of some Chow group of products of very general Kummer surfaces, then we have the triviality of $r_{k,1}$.

\begin{Theorem}\label{HDPETHMMAIN}
Let $k, r$ and $m$ be integers satisfying $3\le k \le 2r + 1 \le 2m-1$.
Suppose that the natural map
\begin{equation}\label{HDPEE801}
\begin{tikzcd}
\displaystyle{\bigoplus_{\sum d_i = k-1} \bigotimes_{j=1}^r \CH_\QQ^{d_j}(Y_j) \otimes \CH_\QQ^{d_{r+1}} (\prod_{j=r+1}^m Y_j)}
\ar[->>]{r}& \displaystyle{\CH_\QQ^{k-1}(\prod_{j=1}^m Y_j)}
\end{tikzcd}
\end{equation}
is surjective
for very general Kummer surfaces $Y_1, ..., Y_r, Y_{r+2}, ..., Y_m$ and all Kummer surfaces $Y_{r+1}$ with $\rank_\ZZ \Pic(Y_{r+1}) \le 19$. Then \eqref{HDPEE800} holds on a product $X$ of $n\le 2m$ very general elliptic curves.
\end{Theorem}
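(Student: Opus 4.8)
The plan is to reduce the orthogonality \eqref{HDPEE800} on the product of elliptic curves to the assumed Chow--K\"unneth decomposition \eqref{HDPEE801} on a product of Kummer surfaces, by transporting the real regulator along the Kummer correspondence. After adjoining auxiliary very general elliptic curves we may assume $n=2m$, the case of smaller $n$ following by restriction to the subproduct spanned by the $2r+2$ curves appearing in $T_{2r+2}$ together with enough padding factors. Fix $\xi\in\CH^k(X,1)$ and, by linearity, a single generator $\omega=\bigotimes_{i\in I}\eta_i\otimes\omega'$ of $T_{2r+2}(H^{2n-2k+2}(X))$ with $|I|=2r+2$, $\eta_i\in H^1(E_i)$, and $\omega'$ living on $\prod_{j\notin I}E_j$ of complementary Hodge type; it suffices to show $r_{k,1}(\xi)(\omega)=0$.

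First I would set up the correspondence. Group the $2r+2$ curves indexed by $I$ into $r+1$ pairs and the remaining $n-2r-2$ curves into $m-r-1$ pairs, forming Kummer surfaces $Y_1,\dots,Y_m$ with $Y_1,\dots,Y_{r+1}$ coming from $I$. Let $\pi\colon X\to Q=\prod(E_{2i-1}\times E_{2i})/\pm1$ be the finite quotient and $\sigma\colon\prod_{j=1}^m Y_j\to Q$ the birational resolution; together they give a correspondence $\Gamma\subset X\times\prod_j Y_j$. Since $-1$ acts by $-1$ on each $H^1(E_i)$, each factor $\eta_a\otimes\eta_b$ on a pair is invariant, and for very general curves its $(1,1)$-part is transcendental; thus $\omega$ is pulled back from $Q$ and corresponds under $\sigma^\ast$ to a class $\tilde\omega=\tau_1\otimes\cdots\otimes\tau_{r+1}\otimes a$ on $\prod_j Y_j$, with $\tau_i\in T(Y_i)^{1,1}$ transcendental for $i\le r+1$ and $a$ an algebraic class on $\prod_{j\ge r+2}Y_j$. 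Using functoriality of the real regulator under finite pushforward and birational pullback, I would identify $r_{k,1}(\xi)(\omega)$, up to a nonzero constant, with the regulator value $r_{k,1}(\tilde\xi)(\tilde\omega)$ on $\prod_j Y_j$, where $\tilde\xi=\Gamma_\ast\xi\in\CH^k(\prod_j Y_j,1)$ is supported on codimension-$(k-1)$ cycles (codimension being preserved since $\pi$ is finite and $\sigma$ birational).

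Next I would invoke \eqref{HDPEE801} with the same $k,r,m$ to decompose the supporting cycle of $\tilde\xi$ as a $\QQ$-combination of external products $\alpha_1\times\cdots\times\alpha_r\times\beta$ with $\alpha_j\in\CH^{d_j}(Y_j)$, $\beta\in\CH^{d_{r+1}}(\prod_{j\ge r+1}Y_j)$, and $\sum d_i=k-1$; the bound $k-1\le 2r$ makes this distribution feasible and $m\ge r+1$ guarantees the bundle factor is nonempty. Feeding this into the product (Leibniz) formula for the real regulator on external products of higher Chow groups, $r_{k,1}(\tilde\xi)(\tilde\omega)$ becomes a sum of terms in which exactly one factor carries the regulator (the $\log|f|$ datum) and the others contribute ordinary cycle classes $\cl(\cdot)$. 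Pairing against $\tau_1\otimes\cdots\otimes\tau_{r+1}\otimes a$, any split-off factor $Y_j$ ($j\le r$) contributing a cycle class pairs an algebraic class against the transcendental $\tau_j$ and so vanishes. Hence every term with the regulator on the bundle dies (all of $Y_1,\dots,Y_r$ then pair algebraically, and $r\ge 1$), and every term with the regulator on some $Y_{j_0}$, $j_0\le r$, dies when $r\ge 2$ (a remaining split-off transcendental factor pairs algebraically). The sole surviving configuration is $r=1$, forcing $k=3$, with the regulator on $Y_1$: there the bundle $\prod_{j\ge 2}Y_j$ contributes $\cl(\beta)$ paired against $\tau_2\otimes a$, which vanishes because for very general Kummer surfaces $\Hom_{\mathrm{HS}}(T(Y_2),H^\ast(\prod_{j\ge 3}Y_j))=0$, so no algebraic class on $\prod_{j\ge 2}Y_j$ has a nonzero $T(Y_2)$-component.

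I expect the main obstacle to be the two compatibility statements underlying the reduction: transporting the real regulator through the Kummer correspondence (the projection formula identifying $r_{k,1}(\xi)(\omega)$ with $r_{k,1}(\tilde\xi)(\tilde\omega)$), and the product formula expressing the regulator of an external product so that the transcendental datum $\log|f|$ sits on a single tensor factor while the others act as cycle classes. The delicate point is that \eqref{HDPEE801} decomposes the ordinary Chow group $\CH^{k-1}$, whereas the regulator lives on $\CH^k(\cdot,1)$; bridging them requires a moving argument realizing $\tilde\xi$ with support in the decomposed cycles and a careful bookkeeping of Hodge types so that the algebraic--transcendental orthogonality, together with the Hodge-independence of the transcendental lattices of very general Kummer surfaces, applies to all but one factor. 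Once these are in place, the vanishing of $r_{k,1}(\xi)(\omega)$, and hence \eqref{HDPEE800}, follows.
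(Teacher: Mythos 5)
Your first reduction --- passing from the product of elliptic curves to a product of Kummer surfaces via the quotient correspondence $f_*g^*$ and the projection formula, so that $T_{2r+2}$ becomes $\bigotimes_{i=1}^{r+1}H_{\mathrm{tr}}^2(Y_i)\otimes H^{4m-2k-2r}(\prod_{i\ge r+2}Y_i)$ --- is exactly the paper's Proposition \ref{HDPEPROP900}, and the reduction to $n=2m$ by projection is also as in the paper. After that, however, your argument diverges and has a genuine gap. You propose to ``decompose the supporting cycle of $\tilde\xi$'' using the hypothesis \eqref{HDPEE801} and then apply a Leibniz rule for the regulator on external products. But \eqref{HDPEE801} is a surjectivity statement for the ordinary Chow group $\CH_\QQ^{k-1}$, i.e.\ for cycles \emph{modulo rational equivalence}; it does not let you replace the actual support $\sum Z_j$ of a higher Chow precycle $\sum(f_j,Z_j)$ by external products while carrying the functions $f_j$ along, and it says nothing about a K\"unneth decomposition of $\CH^k(\prod Y_j,1)$ itself. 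The ``moving argument'' you defer to at the end is precisely the missing content: a higher Chow class is not determined by the rational equivalence class of its support, so no such direct transfer is available. Indeed, if your argument worked it would prove the orthogonality on a fixed product of very general Kummer surfaces with no further input, and it would make no use of the distinguished factor $Y_{r+1}$ with $\rank_\ZZ\Pic\le 19$, which is a sign that the mechanism is different.

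What the paper actually does with \eqref{HDPEE801} is quite different: one Kummer factor is put into a one-parameter family $Z/\Gamma$ (built from pencils of cubics) whose singular fibers are unions of smooth rational surfaces (Proposition \ref{HDPEPROP000}); the hypothesis \eqref{HDPEE801}, applied to the smooth fibers (which have Picard rank $\le 19$, whence the condition on $Y_{r+1}$) and combined with the K\"unneth decomposition for rational surfaces on the singular fibers, feeds the surjectivity \eqref{HDPEE903} into Theorem \ref{HDPETHM000}. That theorem uses the $\CH^{k-1}$-surjectivity only to control the \emph{divisor} of an extension $\overline{\xi}$ of $\xi$ over the boundary fibers, producing a genuine higher Chow class $\eta$ on the total space that agrees with $\xi$ modulo decomposable precycles $\alpha_i$ whose regulators are orthogonal to the transcendental classes (this is where your algebraic-versus-transcendental vanishing correctly enters). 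The conclusion then requires the Gauss--Manin/monodromy argument: $\eta$ extends over the complete family, so $\nabla\pi_*(\wcl_{k,1}(\xi_t)\wedge\omega)=0$, and maximality of the moduli of the Kummer family forces the pairing to vanish. Your proposal is missing all three of these ingredients (degeneration, completion of the cycle across the boundary, and monodromy), and the step it substitutes for them does not follow from the stated hypothesis.
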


In the above theorem, for example, if $n=2m=4$, $r=1$ and $k=3$, then
$$
\sum_{i=1}^2 d_i = 2.
$$
And \eqref{HDPEE801} becomes
$$
\begin{tikzcd}
\displaystyle{\sum_{d_1+d_2=2} \CH_\QQ^{d_1}(Y_1) \otimes \CH_\QQ^{d_2}(Y_2)}
\ar[->>]{r}& \displaystyle{\CH_\QQ^{2}(Y_1\times Y_2)},
\end{tikzcd}
$$
which is equivalent to the K\"unneth decomposition of $\CH_\QQ^2(Y_1\times Y_2)$.

Keep in mind the difference between $Y_{r+1}$ and the rest of $Y_j$.

Note that if \eqref{HDPEE800} holds for $n = n_0$, then we see that it holds for all $k+1\le n\le n_0$ by projecting $E_1\times E_2\times ... \times E_{n_0}$ to $E_1\times E_2 \times ... \times E_n$.
So we just have to prove the above theorem for $n=2m$.

We will prove Theorem \ref{HDPETHMMAIN} in sections \ref{HDPESECCHCC} and \ref{HDPESECPKS}. In section \ref{HDPESECBBCAJ}, we will show that the K\"unneth decomposition
of $\CH_\QQ^2(Y_1\times Y_2)$ is a consequence of the Bloch-Beilinson conjecture on Abel-Jacobi maps. Hence \eqref{HDPEE800} holds for $(k,r, n) = (3,1,4)$ if we assume the Bloch-Beilinson conjecture. Consequently, either the Hodge-$\calD$-conjecture fails for $r_{3,1}$ on a product of four very general elliptic curves or the Bloch-Beilinson conjecture fails.

We work exclusively over $\CC$ unless otherwise stated.

\medskip
We are grateful to the referee for doing a splendid job.

\section{Completion of Higher Chow Cycles}\label{HDPESECCHCC}

Roughly speaking, we will follow the same argument in \cite{ChenLewis4}. That is, we will construct a family of products of Kummer surfaces, extend a higher Chow cycle to the whole family and use a standard monodromy argument to show that it has trivial regulator on a general fiber.
First we need a generalization of \cite[Theorem 0.1]{ChenLewis4}, which allows us to extend a higher Chow cycle over the family after some modification.

\begin{Theorem}\label{HDPETHM000}
Let $f: W \to \Gamma$ be a dominant morphism with connected fibers from a smooth projective
variety $W$ to a smooth projective curve $\Gamma$, let $Y_1,Y_2,...,Y_r$ be smooth projective surfaces with $H^1(Y_j) = 0$ and let $k \le 2r+1$ be a positive integer such that the natural map
\begin{equation}\label{HDPEE903}
\begin{tikzcd}
\displaystyle{\bigoplus_{\sum d_i = k-1} \bigotimes_{j=1}^r \CH_\QQ^{d_j}(Y_j) \otimes \CH_\QQ^{d_{r+1}}(V)}
\ar[->>]{r} & \displaystyle{\CH_\QQ^{k-1}(\prod_{j=1}^r Y_j\times V)}
\end{tikzcd}
\end{equation}
is surjective for all irreducible components $V$ of $W_t$ and all $t\in \Gamma$, where $W_t$ is the fiber of $W/\Gamma$ over $t$.
Let $\xi\in \CH_\QQ^k(\prod Y_j \times W_U, 1)$ be a higher Chow cycle defined on $\prod Y_j \times W_U = \prod Y_j \times (W\times_\Gamma U)$ for an open set $U\subset \Gamma$. Then there exist $\eta\in \CH_\QQ^{k}(\prod Y_j \times W, 1)$ and pre-higher Chow cycles $\alpha_0,\alpha_1, ...,\alpha_r$ on $\prod Y_j \times W$ such that
\begin{equation}\label{HDPEE909}
\alpha_0 \in f^* Z_\QQ^k(\prod Y_j \times \Gamma, 1),
\end{equation}
\begin{equation}\label{HDPEE904}
\alpha_i \in Z_\QQ^0(Y_i)\otimes Z_\QQ^{k}(\prod_{j\ne i} Y_j\times W, 1)
\oplus Z_\QQ^1(Y_i)\otimes Z_\QQ^{k-1}(\prod_{j\ne i} Y_j\times W, 1)
\end{equation}
for $i=1,2,...,r$, and
$$
\eta + \sum_{i=0}^{r} \alpha_i = \xi
$$
on $\prod Y_j\times W_U$, where $Z^m(X)$ is the free abelian group of Chow cycles of codimension $m$ on $X$ and
$Z^m(X, 1)$ is the free abelian group of pre-higher Chow cycles defined at the beginning.
\end{Theorem}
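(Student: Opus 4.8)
The plan is to extend $\xi$ across the missing fibers by taking Zariski closures, to measure the resulting obstruction as a cycle supported over the finite set $\Sigma := \Gamma\setminus U$, and then to kill that obstruction by subtracting correction cycles of the three prescribed shapes \eqref{HDPEE909}--\eqref{HDPEE904}. First I would fix a representative $\xi = \sum_j(f_j, Z_j)$ with $Z_j\subset \prod Y_j\times W_U$ irreducible of codimension $k-1$, take the closures $\overline{Z_j}\subset \prod Y_j\times W$, and extend each $f_j$ to a rational function on $\overline{Z_j}$. This produces a pre-higher Chow cycle $\overline\xi = \sum(f_j,\overline{Z_j})\in Z_\QQ^k(\prod Y_j\times W, 1)$ restricting to $\xi$ over $U$. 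Because $\DIV(\xi)=0$ on $W_U$, the cycle $B := \DIV(\overline\xi)$ is a codimension-$k$ cycle supported on $\prod Y_j\times f^{-1}(\Sigma)$; equivalently, $B$ represents the image $\partial\xi$ of $\xi$ under the boundary map of Bloch's localization sequence for the divisor $\prod Y_j\times f^{-1}(\Sigma)\hookrightarrow \prod Y_j\times W$, and it is precisely the obstruction to lifting $\xi$ to $\CH_\QQ^k(\prod Y_j\times W,1)$. Since $\Sigma$ is finite, $B$ breaks up over the irreducible components $V$ of the fibers $W_t$, $t\in\Sigma$, each contribution defining a class in $\CH_\QQ^{k-1}(\prod Y_j\times V)$.

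The heart of the argument is to decompose $B$ and to distribute it among the $\alpha_i$. Applying the surjectivity \eqref{HDPEE903} to every component $V$, I would write each $\CH_\QQ^{k-1}(\prod Y_j\times V)$-piece of $B$ as a $\QQ$-combination of product classes $\gamma_1\otimes\cdots\otimes\gamma_r\otimes\gamma_V$ with $\gamma_j\in\CH_\QQ^{d_j}(Y_j)$, $\gamma_V\in\CH_\QQ^{d_{r+1}}(V)$ and $\sum_{i=1}^{r+1}d_i=k-1$. The bound $k\le 2r+1$, i.e. $\sum d_i\le 2r$, is exactly what forces, in each such product, either some surface index $i\in\{1,\dots,r\}$ with $d_i\le 1$, or else the extremal multidegree $d_1=\cdots=d_r=2$, $d_{r+1}=0$ with $k=2r+1$. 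In the first case I peel off the $Y_i$-factor: since functions pulled back from $\Gamma$ have divisors supported on whole fibers, I can realize $\gamma_1\otimes\cdots\widehat{\gamma_i}\cdots\otimes\gamma_V$ as the boundary of a pre-higher Chow cycle on $\prod_{j\ne i}Y_j\times W$ built from a codimension-$(k-1-d_i)$ subvariety meeting $W_t$ in a cycle representing $\gamma_V$, together with the pullback of a rational function on $\Gamma$ of prescribed order at $t$; tensoring with $\gamma_i\in Z_\QQ^{d_i}(Y_i)$, $d_i\in\{0,1\}$, yields a summand of the shape \eqref{HDPEE904}. In the extremal case $\gamma_V=[V]$, and the class $\gamma_1\otimes\cdots\otimes\gamma_r\otimes[V]$ assembles over the fiber into a cycle supported on the full fiber $W_t=f^*(t)$, which is the divisor of the pullback of a rational function on $\Gamma$ and so lands in $f^*Z_\QQ^k(\prod Y_j\times\Gamma,1)$, i.e. in $\alpha_0$ as in \eqref{HDPEE909}. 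Collecting these corrections gives $\alpha_0,\dots,\alpha_r$ with $\sum_i\DIV(\alpha_i)$ equal to $B$ modulo rational equivalence on the fibers, whence $\eta:=\overline\xi-\sum_i\alpha_i$ has trivial boundary class and lifts, by exactness of the localization sequence, to an element of $\CH_\QQ^k(\prod Y_j\times W,1)$ restricting to $\xi-\sum_i\alpha_i$ over $U$.

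I expect the \emph{main obstacle} to be the middle step: realizing each K\"unneth piece as the boundary of a correction cycle of exactly the permitted shape. This requires (a) spreading the fiberwise factor $\gamma_V$ to a global codimension-$d_{r+1}$ subvariety of $W$ that cuts out $\gamma_V$ on $W_t$, for which I would use the flexibility of rational equivalence in $\CH_\QQ$ together with a moving lemma, and (b) matching multiplicities by a careful choice of the auxiliary function on $\Gamma$ and its orders along $\Sigma$, which is only meaningful with $\QQ$-coefficients. The hypothesis $H^1(Y_j)=0$ enters here to guarantee that the divisor classes $\gamma_i\in\CH_\QQ^1(Y_j)$ are rigid, since $\Pic^0(Y_j)=0$, so that the peeled factors spread out over the family without monodromy and the tensor decomposition used above is uniform across $\Gamma$.

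The most delicate point is the extremal multidegree, where no surface factor can be peeled and one must check that the residual class is genuinely of full-fiber type, so that it can be charged to the base-pullback term $\alpha_0$ rather than to one of the $\alpha_i$; this is precisely where the upper bound $k\le 2r+1$ is sharp, and it is the case I would work out first, since any failure there would force a correction outside the allowed shapes \eqref{HDPEE909}--\eqref{HDPEE904}.
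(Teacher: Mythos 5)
Your overall architecture matches the paper's: close up $\xi$, read off the obstruction $\DIV(\overline\xi)$ supported over $B=\Gamma\setminus U$, use the K\"unneth surjection \eqref{HDPEE903} to sort it by multidegree, observe that $k\le 2r+1$ forces either some $d_i\le 1$ ($i\le r$) or the extremal case $d_1=\cdots=d_r=2$, $d_{r+1}=0$, and charge the two cases to \eqref{HDPEE904} and \eqref{HDPEE909} respectively. But both of the steps you flag as delicate are genuinely missing, and your proposed mechanism for the first one would not work. To produce $\alpha_i$ of shape \eqref{HDPEE904} you must show that the complementary factor $S\in Z_\QQ^{k-1-d_i}(\prod_{j\ne i}Y_j\times W_B)$ is the divisor of a pre\-cycle on all of $\prod_{j\ne i}Y_j\times W$. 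Your construction --- a global subvariety together with a rational function pulled back from $\Gamma$ --- only yields boundaries that are restrictions of global cycles to \emph{entire} fibers $W_t$; it cannot produce an arbitrary cycle supported on a single component $V$ of a reducible fiber, which is exactly the situation the theorem must handle. What is actually needed, and what the paper proves, is that $S$ is rationally equivalent to zero on the ambient $\prod_{j\ne i}Y_j\times W$ (hence equal to $\DIV$ of \emph{some} precycle, not one of pullback type). This is extracted from the global relation $\sum R_i\sim_{\mathrm{rat}}0$ by intersecting with test cycles $p\times\prod_{j\ne i}Y_j\times W$ and with elements of $Z_\QQ^1(Y_i)\otimes Z_\QQ^0(\cdots)$, and it is here --- via the nondegeneracy of the intersection pairing $\CH_\QQ^1(Y_i)\otimes\CH_\QQ^1(Y_i)\to H^2(Y_i,\QQ)\cong\QQ$ --- that the hypothesis $H^1(Y_i)=0$ enters. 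Your reading of that hypothesis as ``rigidity of divisor classes so they spread without monodromy'' misses its actual function as a duality making the testing argument work.

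The second gap is the extremal case, which you explicitly leave open: you assert that the residual class ``assembles into a cycle supported on the full fiber,'' but this is precisely the nontrivial claim. A priori $R_0=\sum L_a\otimes S_a$ with distinct components $S_a$ of a degenerate fiber $W_p$ carrying unrelated coefficients $L_a\in Z_\QQ^{2r}(Y)$, and one must prove $\mu_b L_a\sim_{\mathrm{rat}}\mu_a L_b$ before $R_0$ can be rewritten as $f^*G$. The paper does this by cutting $W$ down to a fibered surface $D$ with general hyperplanes, invoking the negative definiteness of the intersection matrix of any proper subset of the components of a degenerate fiber (Zariski's lemma) to manufacture a $1$-cycle $C$ supported on $W_p$ with $C.S_a\equiv\mu_b$, $C.S_b\equiv-\mu_a$, $C.S_i\equiv 0$ otherwise, and then pushing $(Y\otimes C).R_0$ forward to $Y\times\Gamma$. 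Without this (or an equivalent) argument the correction cannot be forced into the allowed shape \eqref{HDPEE909}, so the proof is incomplete exactly where you predicted it would be hardest.
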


\begin{proof}
We can extend $\xi$ to a pre-higher Chow cycle $\overline{\xi}$ on $\prod Y_j\times W$ with $\DIV(\overline{\xi})$ supported on $\prod Y_j\times W_B$ for $B = \Gamma \backslash U$.
By the surjection \eqref{HDPEE903}, 
we may choose the completion $\overline{\xi}$ of $\xi$, after some modification by a cycle in $Z_\QQ^{k-1}(\prod Y_j\times W_B,1)$, such that
$$
\DIV(\overline{\xi}) = \sum_{i=1}^r R_i + R_0
$$
where 
$$
\begin{aligned}
& R_i \in \bigotimes_{a=1}^{i-1} Z_\QQ^2(Y_a) \otimes \Big(Z_\QQ^0(Y_i)
\otimes Z_\QQ^{k-2i+1}(\prod_{j=i+1}^r Y_j\times W_B)
\\
&\hspace{96pt}
\oplus Z_\QQ^1(Y_i)
\otimes Z_\QQ^{k-2i}(\prod_{j=i+1}^r Y_j\times W_B)
\Big)
\end{aligned}
$$
for $i=1,2,...,r$ and
$$
R_0 \in \bigotimes_{a=1}^{r} Z_\QQ^2(Y_a) \otimes Z_\QQ^{k-2r-1}(W_B).
$$
Note that 
$$
\sum_{i=0}^r R_i \sim_\text{rat} 0
$$
in $\CH_\QQ^{k}(\prod Y_j\times W)$. 

Let us prove by induction that $R_i = \DIV(\alpha_i)$ 
for $i = 1,2,...,r$ and some $\alpha_i$ in the space \eqref{HDPEE904}.

Starting with $R_1$, we write
$$
\begin{aligned}
R_1 = R_{1,0} + R_{1,1}\hspace{24pt} &\text{for } R_{1,0} \in Z_\QQ^0(Y_1)
\otimes Z_\QQ^{k-1}(\prod_{j=2}^r Y_j\times W_B)\\
&\hspace{15pt} R_{1,1} \in Z_\QQ^1(Y_1)
\otimes Z_\QQ^{k-2}(\prod_{j=2}^r Y_j\times W_B)
\end{aligned}
$$
Clearly, $R_{1,0}$ can be written as
$$
R_{1,0} = Y_1 \otimes S \hspace{24pt} \text{for some } S\in Z_\QQ^{k-1}(\prod_{j=2}^r Y_j\times W_B)
$$
By intersecting $\sum R_i$ with $p\times Y_2\times ... \times Y_r \times W$ for a point $p\in Y_1$, we see that $S \sim_\text{rat} 0$ in $\CH_\QQ^{k}(\prod_{j=2}^r Y_j\times W)$. Therefore,
$$
R_{1,0} = \DIV(\alpha_{1,0}) \hspace{24pt} \text{for some } \alpha_{1,0}\in Z_\QQ^0(Y_1) \otimes Z_\QQ^{k}(\prod_{j=2}^r Y_j\times W, 1)
$$ 
Hence $R_{1,0} \sim_\text{rat} 0$ and
$$
R_{1,1} + \sum_{i=2}^r R_i + R_0 \sim_\text{rat} 0
$$
in $\CH_\QQ^{k}(\prod Y_j\times W)$. 

We can write
$$
R_{1,1} = \sum L_a \otimes S_a \hspace{12pt}\text{for some } L_a\in Z_\QQ^1(Y_1) \text{ and } S_a\in Z_\QQ^{k-2}(\prod_{j=2}^r Y_j\times W_B)
$$
We may assume that $L_a$ are linearly independent in $\CH_\QQ^1(Y_1)$, after further modifying $\overline{\xi}$ by some cycle supported on $\prod Y_j\times W_B$.

Since $H^1(Y_1) = 0$, the intersection pairing
$$
\begin{tikzcd}
\CH_\QQ^1(Y_1) \otimes \CH_\QQ^1(Y_1) \ar{r} & H^2(Y_1,\QQ)\cong \QQ
\end{tikzcd}
$$
is nondegenerate. Therefore, by intersecting $R_{1,1} + R_2 + ... + R_r + R_0$ with cycles in
$$
Z_\QQ^1(Y_1)\otimes Z_\QQ^0(\prod_{j=2}^r Y_j\times W),
$$
we obtain $S_a \sim_\text{rat} 0$ in $\CH_\QQ^{k-1}(\prod_{j=2}^r Y_j\times W)$ for all $a$. Consequently,
$$
R_{1,1} = \DIV(\alpha_{1,1}) \hspace{24pt} \text{for some } \alpha_{1,1}\in Z_\QQ^1(Y_1) \otimes Z_\QQ^{k-1}(\prod_{j=2}^r Y_j\times W, 1)
$$
Then
$$
R_1 = \DIV(\alpha_{1,0} + \alpha_{1,1}) = \DIV(\alpha_1)
$$
and hence
$$
\sum_{i=2}^r R_i + R_0 \sim_\text{rat} 0
$$
in $\CH_\QQ^{k}(\prod Y_j\times W)$. 

In this way, we can inductively show that $R_i = \DIV(\alpha_i)$ for $i=1,2,...,r$ by intersecting $R_i + ... + R_r + R_0$ with cycles in
$$
Z_\QQ^2(Y_i) \otimes Z_\QQ^0(\prod_{j\ne i} Y_j\times W)
\hspace{12pt}\text{and}\hspace{12pt}
Z_\QQ^1(Y_i) \otimes Z_\QQ^0(\prod_{j\ne i} Y_j\times W).
$$
It follows that
$$
R_0 \sim_\text{rat} 0
$$
in $\CH_\QQ^{k}(\prod Y_j\times W)$. It remains to find $\alpha_0$ in the space \eqref{HDPEE909} such that $R_0 = \DIV(\alpha_0)$.

If $k < 2r+1$, then $R_0 = 0$ and there is nothing to prove. Suppose that
$k = 2r+1$. In this case,
$$
R_0 \in Z_\QQ^{2r}(Y) \otimes Z_\QQ^0(W_B) \hspace{24pt} \text{for } Y = \prod_{j=1}^r Y_j.
$$
Let us write
$$
R_0 = \sum L_a \otimes S_a
$$
where $S_a$ are irreducible components of $W_B$ and $L_a\in Z_\QQ^{2r}(Y)$. Let $\mu_a$ be the multiplicity of $S_a$ in $W_B$. We claim that for every pair $S_a$ and $S_b$ with $f(S_a) = f(S_b)$, i.e., for any two components $S_a$ and $S_b$ of $W_p$ and all $p\in B$,
$$
\mu_b L_a \sim_\text{rat} \mu_a L_b
$$
over $\QQ$ on $Y$.

Since $W$ is smooth, the components of $W_B$ are Cartier divisors of $W$. We take a sufficiently ample divisor $A$ on $W$ and cut $W$ by $n-2$ general members $A_1,A_2,...,A_{n-2}\in |A|$ for $n=\dim W$. The resulting $D = A_1\cap A_2\cap ...\cap A_{n-2}$ is a smooth projective surface and a flat family of curves over $\Gamma$. The basic intersection theory on surfaces tells us that for every $p\in B$, the intersection matrix of any $m-1$ irreducible components of $D_p$ is negative definite, where $m$ is the number of irreducible components of $W_p$. Therefore, for any two components $S_a$ and $S_b$ of $W_p$, there exists $\Lambda\in Z^1(W)$, supported on $W_p$, such that 
$$
\begin{aligned}
\Lambda.D.S &\equiv 0 \text{ for all components } S\ne S_a, S_b \text{ of } W_p,\\
\Lambda.D.S_a &\not\equiv 0, \text{ and } \Lambda.D.(\mu_a S_a + \mu_b S_b) = 0
\end{aligned}
$$
where ``$\equiv$'' is numerical equivalence. And since $\Lambda$ is supported on $W_p$, we actually have $\Lambda.D.S_i \equiv 0$ for all $i\ne a, b$. For simplicity, by choosing the cycle $\Lambda\in Z_\QQ^1(W)$ over $\QQ$, we may assume that $\Lambda.D.S_a \equiv \mu_b$. In summary, by letting $C = \Lambda.D$, we conclude that
for every $p\in B$ and any two components $S_a$ and $S_b$ of $W_p$, we can find a $1$-cycle $C\in Z_\QQ^{n-1}(W)$ such that 
$$
C.S_i \equiv 0 \text{ for } i \ne a, b,\ C.S_a \equiv \mu_b, \text{ and }
C.S_b \equiv -\mu_a.
$$
Then
$$
\begin{aligned}
f_*((Y\otimes C) . R_0) &=  (C.S_a) L_a \otimes p + (C.S_b) L_b \otimes p\\
&= ((C.S_a) L_a + (C.S_b) L_b) \otimes p
\end{aligned}
$$
for $f: Y\times W\to Y\times \Gamma$. Thus
$$
(C.S_a) L_a + (C.S_b) L_b \sim_\text{rat} 0 \Rightarrow \mu_b L_a - \mu_a L_b \sim_\text{rat} 0.
$$
Therefore, $\mu_b L_a \sim_\text{rat} \mu_a L_b$ for all pairs of components $S_a$ and $S_b$ of $W_p$. This implies that after replacing $\overline{\xi}$ by $\overline{\xi} + \beta$ for some $\beta\in Z_\QQ^0(W_B)\otimes Z_\QQ^{k-1}(Y, 1)$, we may write $R_0$ as
$$
R_0 = \sum_{p\in B} M_p\otimes W_p
$$
where $M_p = (1/\mu_a) L_a$ for a component $S_a$ of $W_p$. Namely, $R_0 = f^* G$ for some $G\in Z_\QQ^{k-1}(Y)\otimes Z_\QQ^1(\Gamma)$. Since $R_0\sim_\text{rat} 0$ on $Y\times W$, $G\sim_\text{rat} 0$ on $Y\times \Gamma$. So there exists $\alpha_0 \in f^* Z_\QQ^k(Y \times \Gamma, 1)$ such that $R_0 = \DIV(\alpha_0)$.

In conclusion,
$$
\eta = \overline{\xi} - \sum_{i=0}^{r} \alpha_i
$$
is a higher Chow cycle in $\CH_\QQ^k(Y\times W, 1)$ with the required property.
\end{proof}

\section{Products of Kummer Surfaces}\label{HDPESECPKS}

We will reduce the triviality of $r_{k,1}$ on products of elliptic curves to that on products of Kummer surfaces.

For a product $E_1\times E_2$ of two elliptic curves, we fix two involutions $\sigma_1$ and $\sigma_2$ on $E_i$ and let $E_1\times E_2 / \sigma_1\times\sigma_2$ be the quotient of $E_1\times E_2$ by the action $\sigma_1\times \sigma_2$. Usually, we simply write it as $E_1\times E_2 / \pm 1$.
Note that the action of $\sigma_1\times \sigma_2$ is invariant on $H^2(E_1\times E_2)$, i.e.,
\begin{equation}\label{HDPEE900}
(\sigma_1\times \sigma_2) \omega = \omega \hspace{12pt} \text{for all } \omega\in H^2(E_1\times E_2)
\end{equation}

The resulting surface $E_1\times E_2 / \pm 1$ has $16$ ordinary double points, corresponding to $16$ fixed points of $\sigma_1\times \sigma_2$. Blowing up at the $16$ double points, we obtain a Kummer K3 surface $Y$. Indeed, we have a diagram
\begin{equation}\label{HDPEE906}
\begin{tikzcd}
Z \ar{r}{f} \ar{d}[left]{g} & Y\ar{d}\\
X \ar{r} & X/\pm 1
\end{tikzcd}
\end{equation}
where $X = E_1\times E_2$, $Z$ is the blowup of $X$ at the $16$ fixed points of $\sigma_1\times \sigma_2$ and $f$ is a finite morphism of degree $2$ ramified at the $16$ exceptional divisors of $Z\to X$. The action $\sigma_1\times \sigma_2$ on $X$ extends to the Galois action $\sigma$ on $Z$ associated to $f$. Clearly, $\sigma$ preserves the $16$ exceptional divisors of $g$. Combining this with \eqref{HDPEE900}, we see that
$$\sigma (\omega) = \omega \hspace{12pt} \text{for all } \omega\in H^2(Z).$$
Thus, we have
\begin{equation}\label{HDPEE901}
f^* f_* \omega = \omega + \sigma(\omega) = 2\omega \hspace{12pt} \text{for all } \omega\in H^2(Z).
\end{equation}
Combined with the projection formula $f_*f^* \omega = 2\omega$, \eqref{HDPEE901} implies that $f^*$ and $f_*$ are isomorphims between $H^2(Y)$ and $H^2(Z)$ satisfying (with $\deg f= 2$)
\begin{equation}\label{HDPEE902}
\begin{tikzcd}
H^2(Y)\ar{r}{f^*}[below]{\sim} \ar[bend right=25]{rr}[below]{(\deg f) I} & H^2(Z) \ar[bend left=25]{rr}{(\deg f) I}\ar{r}{f_*}[below]{\sim} & H^2(Y)
\ar{r}{f^*}[below]{\sim} & H^2(Z)
\end{tikzcd}
\end{equation}
It follows that $(1/\deg f) f_* = (f^*)^{-1}$ preserves the intersection pairing and hence
\begin{equation}\label{HDPEE907}
\langle f_* \alpha, f_* \beta\rangle = (\deg f) f_* \langle \alpha, \beta\rangle
\end{equation}
for all $\alpha, \beta\in H^2(Z)$.

Furthermore, $f^*$ and $f_*$ induce isomorphisms between the $\QQ$-Hodge structures on $H^2(Y,\QQ)$ and $H^2(Z,\QQ)$. Thus they induce isomorphims between the algebraic/transcendental parts of $H^2(Y)$ and $H^2(Z)$.

We define
\begin{equation}\label{HDPEE905}
H_{\mathrm{tr}}^2(Y) = f_* g^* (H^1(E_1)\otimes H^1(E_2)).
\end{equation}
Strictly speaking, this is not exactly the transcendental part of $H^2(Y)$. It is the subspace orthogonal to the $18$ algebraic classes of $H^2(Y)$ corresponding to the two fibers of $E_1\times E_2$ over $E_i$ and $16$ exceptional divisors of $g$. 
For very general $E_1$ and $E_2$, this is the transcendental part of $H^2(Y)$. For arbitrary $E_1$ and $E_2$, it contains the transcendental part of $H^2(Y)$ as a subspace.

Based on the above observations, we have

\begin{Proposition}\label{HDPEPROP900}
Let $E_1,E_2,...,E_{2m}$ be $n = 2m$ elliptic curves and let $Y_{ij}$ be the Kummer surface birational to $E_i\times E_j/\pm 1$. Then \eqref{HDPEE800} holds if the real regulator $r_{k,1}$ on
$$
Y = Y_{a_1 a_2} \times Y_{a_3 a_4} \times ... \times Y_{a_{2m-1} a_{2m}} 
$$
satisfies
$$
r_{k,1}(\CH_\RR^k(Y,1)) \subset \Big(\bigotimes_{i=1}^{r+1} H_{\mathrm{tr}}^2(Y_{a_{2i-1}a_{2i}}) \otimes H^{4m-2k - 2r}(\prod_{i=r+2}^m Y_{a_{2i-1}a_{2i}})\Big)^\perp
$$
for all $\{a_1, a_2, ..., a_{2m}\} = \{1,2,...,2m\}$, where $H_{\mathrm{tr}}^2(Y_{ij})$ is the subspace of $H^2(Y_{ij})$ defined by \eqref{HDPEE905}.
\end{Proposition}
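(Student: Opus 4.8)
The plan is to realize $X$ and $Y$ as the two sides of a single correspondence and to transport the higher Chow cycle across it, controlling regulators by functoriality. Concretely, for a full pairing $\{a_1,\dots,a_{2m}\}=\{1,\dots,2m\}$ I form the product $Z=\prod_{i=1}^m Z_{a_{2i-1}a_{2i}}$ of the blowups $Z_{ij}$ appearing in \eqref{HDPEE906}, together with the two product morphisms $g=\prod g_{a_{2i-1}a_{2i}}\colon Z\to X$ (birational, a composite of point blowups) and $f=\prod f_{a_{2i-1}a_{2i}}\colon Z\to Y$ (finite of degree $2^m$). Given $\xi\in\CH^k_\RR(X,1)$ I push it across the roof to $f_*g^*\xi\in\CH^k_\RR(Y,1)$, and I invoke the functoriality of the real regulator, namely $r_{k,1}(h_*\alpha)(\omega)=r_{k,1}(\alpha)(h^*\omega)$ for proper $h$ and $r_{k,1}(h^*\beta)(\omega)=r_{k,1}(\beta)(h_*\omega)$, to get
\[
r_{k,1}(f_*g^*\xi)(\eta)=r_{k,1}(g^*\xi)(f^*\eta)=r_{k,1}(\xi)\bigl(g_*f^*\eta\bigr)
\]
for every $\eta\in H^{n-k+1,n-k+1}(Y,\RR)$. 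Since $X$, $Y$, $Z$ all have dimension $n=2m$, no shift in codimension occurs and all the operators $g^*$, $f_*$, $g_*$, $f^*$ preserve the relevant (co)homological degrees.

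The second ingredient is a cohomological dictionary for the operator $g_*f^*=\bigotimes_i\bigl(g_{a_{2i-1}a_{2i},*}\,f^*_{a_{2i-1}a_{2i}}\bigr)$. On a single factor, \eqref{HDPEE902} gives $f_{ij}^*\colon H^2(Y_{ij})\xrightarrow{\sim}H^2(Z_{ij})$, and since $g_{ij}$ is a blowup of points we have $g_{ij,*}g_{ij}^*=\mathrm{id}$ while $g_{ij,*}$ kills the exceptional classes; hence $g_{ij,*}f_{ij}^*$ surjects onto $H^2(E_i\times E_j)$. Combining this with \eqref{HDPEE901} and \eqref{HDPEE905} shows $g_{ij,*}f_{ij}^*H^2_{\mathrm{tr}}(Y_{ij})=2\,\bigl(H^1(E_i)\otimes H^1(E_j)\bigr)$, so the operator carries $H^2_{\mathrm{tr}}(Y_{ij})$ isomorphically onto $H^1(E_i)\otimes H^1(E_j)$. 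The same computation in degrees $0$ and $4$, together with $H^1(Y_{ij})=H^3(Y_{ij})=0$, shows that on each factor $g_{ij,*}f_{ij}^*$ surjects onto exactly the even part $H^0\oplus H^2\oplus H^4$ of $H^*(E_i\times E_j)$, which is the part fixed by $\sigma_1\times\sigma_2=(-1)\times(-1)$. By Künneth, $g_*f^*$ therefore surjects, as a morphism of real Hodge structures, onto the subspace of $H^*(X)$ invariant under $\prod_j(-1)_{E_j}$, i.e. onto the even-total-degree part.

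The final step tests $r_{k,1}(\xi)$ against $T_{2r+2}(H^{2n-2k+2}(X))$ one Künneth piece at a time. Fix $I\subset\{1,\dots,2m\}$ with $|I|=2r+2$ and a class $\omega\in\bigotimes_{i\in I}H^1(E_i)\otimes H^{2n-2k-2r}(\prod_{j\notin I}E_j)$ lying in $H^{n-k+1,n-k+1}(X,\RR)$; the key observation is that the complementary factor sits in degree $2n-2k-2r=2(n-k-r)$, which is \emph{even}, so $\omega$ is invariant under $\prod_j(-1)_{E_j}$. I then choose the full pairing so that $I$ is exactly the union of the first $r+1$ pairs. By the dictionary above there is a real $(n-k+1,n-k+1)$-class
\[
\eta\in\bigotimes_{i=1}^{r+1}H^2_{\mathrm{tr}}(Y_{a_{2i-1}a_{2i}})\otimes H^{4m-2k-2r}\Bigl(\prod_{i=r+2}^m Y_{a_{2i-1}a_{2i}}\Bigr)
\]
with $g_*f^*\eta=c\,\omega$ for some $c\ne 0$. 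The hypothesis of the proposition applied to this pairing gives $r_{k,1}(f_*g^*\xi)(\eta)=0$, whence $r_{k,1}(\xi)(\omega)=c^{-1}r_{k,1}(f_*g^*\xi)(\eta)=0$. Running over all $I$ and all such $\omega$ yields $r_{k,1}(\xi)\in T_{2r+2}(H^{2n-2k+2}(X))^\perp$, which is \eqref{HDPEE800}. I expect the main obstacle to be the careful justification of the regulator functoriality identities — in particular the pullback $g^*$ along the birational (non-flat) morphism and the precise compatibility of $f_*$ and $g^*$ with the current-theoretic definition $r_{k,1}(\xi)(\omega)=\sum_j\int_{Z_j}\log|f_j|\,\omega$ — together with verifying that the Künneth factorization of $g_*f^*$ respects the Hodge decomposition, so that the preimage $\eta$ can indeed be taken of pure type $(n-k+1,n-k+1)$.
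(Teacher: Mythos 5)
Your overall strategy---transporting $\xi$ across the roof $X \xleftarrow{g} Z \xrightarrow{f} Y$ built from the diagrams \eqref{HDPEE906} and using adjunction for the regulator to reduce to the hypothesis on $Y$---is exactly the paper's strategy. But you run the adjunction in the opposite direction, and that is where a genuine gap appears. You test $r_{k,1}(f_*g^*\xi)$ against a class $\eta$ on $Y$ and need $g_*f^*\eta = c\,\omega$ for your chosen test class $\omega$ on $X$, so you need $\omega$ to lie in the image of $g_*f^*$. You assert that this image is the even-total-degree part of $H^*(X)$, on the grounds that each $g_{ij,*}f_{ij}^*$ surjects onto the even part of $H^*(E_i\times E_j)$. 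That identification is wrong: since $f$ is the quotient by the group generated by the $m$ involutions $\sigma_{a_{2i-1}}\times\sigma_{a_{2i}}$, the image of $g_*f^*$ is the K\"unneth tensor product of the per-pair images, i.e.\ the classes of \emph{even degree on each chosen pair}, which is strictly smaller than the even-total-degree part as soon as there are at least two pairs outside $I$ (i.e.\ $m\ge r+3$). Concretely, a K\"unneth monomial whose complementary factor contains $H^1(E_{a_{2r+3}})\otimes H^0(E_{a_{2r+4}})\otimes H^1(E_{a_{2r+5}})\otimes H^2(E_{a_{2r+6}})$ has even total degree but odd degree on two of the remaining pairs; it is anti-invariant under the corresponding involutions and has no preimage $\eta$ for the pairing you fixed, so your step ``there is $\eta$ with $g_*f^*\eta=c\omega$'' fails for such $\omega$.

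The gap is repairable, and the repair is precisely the reason the proposition quantifies over \emph{all} pairings $\{a_1,\dots,a_{2m}\}=\{1,\dots,2m\}$: decompose $\omega$ into K\"unneth monomials and, for each monomial, choose the pairing of the curves outside $I$ so that the factors carrying $H^1$ are matched with one another (their number is even because the total complementary degree $2n-2k-2r$ is even); then every pair has even degree and your dictionary does produce $\eta$. The paper sidesteps the issue entirely by running the adjunction the other way: it computes $\langle r_{k,1}(\xi),\omega\otimes\eta\rangle = (\deg f)^{-1}\langle r_{k,1}(f_*g^*\xi),\, f_*g^*(\omega\otimes\eta)\rangle$ and observes that $f_*g^*(\omega\otimes\eta)$ lands in $\bigotimes_{i=1}^{r+1}H_{\mathrm{tr}}^2(Y_{a_{2i-1}a_{2i}})\otimes H^{4m-2k-2r}(\prod_{i=r+2}^m Y_{a_{2i-1}a_{2i}})$ automatically, by the very definition \eqref{HDPEE905} of $H_{\mathrm{tr}}^2$ and because $f_*g^*$ preserves degree on the remaining factors---no surjectivity is needed, since the pushforward is allowed to kill the anti-invariant pieces. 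The functoriality identities you flag as the main obstacle are the same ones the paper uses (via \eqref{HDPEE902} and \eqref{HDPEE907}) and are not where the difficulty lies.
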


\begin{proof}
Clearly, $T_{2r+2}(H^{4m-2k+2}(X))$ is spanned by the forms
$$
\underbrace{\omega_1 \otimes \omega_2 \otimes ... \otimes \omega_{2r+2}}_\omega \otimes \eta
$$
for some $\omega_i\in H^1(E_{a_i})$ and
$$
\eta \in H^{4m - 2k-2r}(\prod_{i\ne a_1,...,a_{2r+2}} E_i).
$$
It suffices to prove that
\begin{equation}\label{HDPEE908}
r_{k,1}(\CH_\RR^k(X,1)) \subset (\omega\otimes \eta)^\perp
\end{equation}
For simplicity, we may assume that $(a_1,a_2,...,a_{2r+2}) = (1,2,...,2r+2)$.

Let $Z_{ij}$ be the blowup of $E_i\times E_j$ at the $16$ fixed points and let
$$
Y = Y_{12}\times Y_{34}\times ... \times Y_{2m-1,2m} 
$$
and
$$
Z = Z_{12}\times Z_{34}\times ... \times Z_{2m-1,2m}.
$$
We have the commutative diagram \eqref{HDPEE906}. 

By \eqref{HDPEE902} and \eqref{HDPEE907}, 
$$
\begin{aligned}
&\quad\langle r_{k,1}(\xi), \omega \otimes \eta\rangle = g^* \langle r_{k,1}(\xi), \omega \otimes \eta\rangle = \langle g^* (r_{k,1}(\xi)), g^*(\omega \otimes \eta)\rangle\\
&= f_* \langle r_{k,1}(g^* \xi), g^*(\omega \otimes \eta)\rangle = \frac{1}{\deg f} \langle f_*(r_{k,1}(g^* \xi)), f_* g^*(\omega \otimes \eta)\rangle\\
&= \frac{1}{\deg f} \langle r_{k,1}(f_* g^* \xi), f_* g^*(\omega \otimes \eta)\rangle = 0
\end{aligned}
$$
for all $\xi\in \CH^k(X, 1)$ and \eqref{HDPEE908} follows.
\end{proof}

By the above proposition, to prove the triviality of $r_{k,1}$ on a product of $2m$ very general elliptic curves in our main Theorem \ref{HDPETHMMAIN}, it suffices to prove
\begin{equation}\label{HDPEE804}
r_{k,1}(\CH_\RR^k(\prod_{i=1}^m Y_i, 1)) \subset \Big(\bigotimes_{i=1}^{r+1} H_{\mathrm{tr}}^2(Y_i) 
\otimes H^{4m-2k-2r}(\prod_{i=r+2}^m Y_{i})\Big)^\perp
\end{equation}
for a product of $m$ very general Kummer surfaces $Y_1,Y_2,...,Y_m$.

Now let us try to use the argument in \cite{ChenLewis4} to prove \eqref{HDPEE804} and thus Theorem \ref{HDPETHMMAIN}. As in \cite{ChenLewis4}, we first construct a one-parameter family of Kummer surfaces with ``nice'' singularities.

We start with the construction of two flat projective families $S/B$ and $T/B$ of curves over a smooth projective curve $B$ satisfying
\begin{itemize}
\item $S$ and $T$ are smooth,
\item there is a nonempty finite set $\Sigma\subset B$ such that $S_b$ and $T_b$ are rational curves with a node for $b\in \Sigma$ and they are smooth elliptic curves for $b\not\in \Sigma$,
\item $S_b\times T_b$ is a general product of two elliptic curves for $b\in B$ general,
\begin{equation}\label{HDPEE802}
h^{1,1}(S_b\times T_b, \QQ) := \dim H^{1,1}(S_b\times T_b, \QQ) \le 3 \hspace{24pt} \text{for all } b\in B\backslash \Sigma,
\end{equation}
\item and both $S/B$ and $T/B$ have sections.
\end{itemize}

By \eqref{HDPEE802}, the one-parameter family of Kummer surfaces constructed from $S\times_B T$ is generically of Picard rank $18$ and of Picard rank $19$ (but never $20$) at finitely many points of $B\backslash \Sigma$.

It is not hard to construct such $S/B$ and $T/B$ individually. The difficulty is that we have to make sure that $S/B$ and $T/B$ are singular over the same points $b\in B$. Here is one construction.

We let $G\subset \PP^2\times \PP^1$ be a general pencil of cubic curves. It is well known that $G/\PP^1$ has exactly $12$ nodal fibers over $p_1,p_2,...,p_{12}\in \PP^1$. We choose two different morphisms $g_i: \PP^1\to \PP^1$ of degree $12$ that map all $p_1, p_2,...,p_{12}$ to the same point $q\in \PP^1$, i.e.,
$$
g_i^*(q) = p_1 + p_2 + ... + p_{12} \hspace{24pt} \text{for } i=1,2.
$$
Let $B$ be the normalization of the fiber product of $g_1: \PP^1\to \PP^1$ and $g_2: \PP^1\to \PP^1$ with the diagram
$$
\begin{tikzcd}
& B \ar{dl}[above left]{\pi_1} \ar{dr}{\pi_2} \ar{dd}{\pi}\\
\PP^1 \ar{dr}[below left]{g_1} & & \PP^1 \ar{dl}{g_2}\\
& \PP^1
\end{tikzcd}
$$
Then
$$
\Sigma = \pi^{-1}(q) = \pi_1^{-1}\{p_1,p_2,...,p_{12}\} = \pi_2^{-1}\{p_1,p_2,...,p_{12}\}.
$$
Indeed, for general choices of $g_1$ and $g_2$, $B$ is irreducible and $\pi: B\to \PP^1$ has degree $144$.

Let $S = G\times_{\PP^1} B$ be the fiber product of $G\to \PP^1$ and $\pi_1: B\to \PP^1$ and let $T = G\times_{\PP^1} B$ be the fiber product of $G\to \PP^1$ and $\pi_2: B\to \PP^1$. It is not hard to see that $S/B$ and $T/B$ have the required properties for very general choices of $g_1$ and $g_2$: \eqref{HDPEE802} holds since there are only countably many products of elliptic curves $E\times F$ with $h^{1,1}(E\times F, \QQ) = 4$; it is easy to choose $g_i$ such that $S_b\times T_b$ is not one of them for all $b\in B\backslash \Sigma$; the pencil $G/\PP^1$ has infinitely many sections so the same holds for both $S/B$ and $T/B$. This shows the existence of such $S/B$ and $T/B$.

Since $S/B$ has a section, we have an involution $\sigma_S: S/B\dashrightarrow S/B$ defined on smooth fibers of $S/B$. This involution extends to singular fibers of $S/B$ as well:  for a nodal fiber $S_b$, it extends to an automorphism $S_b\to S_b$ fixing three points including the node. Indeed, this is the Galois action induced by a degree $2$ map $S_b\to \PP^1$.
So we have an automorphism $\sigma_S: S\to S$ preserving the base $B$ of order $2$. The fixed locus of $\sigma_S$ consists of a multisection of $S/B$ which meets each smooth fiber transversely at $4$ points and each singular fiber at $3$ points including the node.
Of course, the same holds for $T/B$ and we have an involution $\sigma_T: T/B\to T/B$.

Let $R = S\times_B T / \sigma$ for $\sigma = \sigma_S \times \sigma_T$. After resolving the singularities of $R$, we obtain a family of Kummer surfaces over $B$. 
Let $Y_1, ..., Y_r, Y_{r+2}, ..., Y_m$ be $m-1$ very general Kummer surfaces and let us try to prove \eqref{HDPEE804} where $Y_{r+1}$ is the Kummer surface birational to $R_t$ for $t\in B$ general. If \eqref{HDPEE804} fails, then there exist a finite base change $\phi: \Gamma\to B$, a desingularization $Z\rightarrow R\times_B \Gamma$ and a higher Chow cycle
$$
\xi\in \CH_\QQ^k(\prod_{i=1}^r Y_i \times Z_U \times \prod_{i=r+2}^m Y_i, 1)
$$
over a nonempty open set $U\subset \Gamma$ such that
\begin{itemize}
\item $Z_t$ is a Kummer surface birational to $R_{\phi(t)}$ for $t\not\in \phi^{-1}(\Sigma)$, and
\item for every $t\in U$,
$$
r_{k,1}(\xi_t) \not\in \big(\bigotimes_{i=1}^{r} H_{\mathrm{tr}}^2(Y_i) 
\otimes H_{\mathrm{tr}}^2(Z_t) \otimes H^{4m-2k-2r}(\prod_{i=r+2}^m Y_{i})\big)^\perp.
$$
\end{itemize}

We claim that we can choose $Z$, after a further finite base change $\Gamma'\to \Gamma$, such that
every irreducible component of $Z_t$ is a smooth rational surface for all $t\in \phi^{-1}(\Sigma)$. Namely, we claim

\begin{Proposition}\label{HDPEPROP000}
Let $R = S\times_B T / \sigma$ be constructed as above and let $\phi: \Gamma\to B$ be a finite morphism from a smooth projective curve $\Gamma$ to $B$ such that the ramification index of $\phi$ at each point of $\phi^{-1}(\Sigma)$ is even. Then there exists a desingularization $Z\to R\times_B \Gamma$ such that every irreducible component of $Z_t$ is a smooth rational surface for all $t\in \phi^{-1}(\Sigma)$.
\end{Proposition}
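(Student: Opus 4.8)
The plan is to reduce everything to a local analytic computation near the points of $\phi^{-1}(\Sigma)$, perform semistable reduction on the two elliptic families $S/B$ and $T/B$ separately, and then use the parity hypothesis to force $\sigma$ to act freely on the bad points of the fibre product. First I would observe that for $t\in\Gamma$ with $\phi(t)\notin\Sigma$ the fibre of $R\times_B\Gamma$ is already (birational to) a smooth Kummer surface, so a desingularization needs attention only over $\phi^{-1}(\Sigma)$. Moreover, the only singularities of $S\times_B T$ lie over $\Sigma$, and the only ones that are not resolved by one of the two factors being submersive occur at the points (node of $S_b)\times($node of $T_b)$, where $S\times_B T$ has an ordinary double point with local model $\{xy=zw\}$ and, by the branch--swapping description of $\sigma_S,\sigma_T$ at the nodes, $\sigma$ acts by $(x,y,z,w)\mapsto(y,x,w,z)$. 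Since $R\times_B\Gamma=\big((S\times_B\Gamma)\times_\Gamma(T\times_B\Gamma)\big)/\sigma$, the entire problem localizes to resolving this quotient near those points.

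Next I would carry out semistable reduction on each factor. Writing $2f$ for the (even) ramification index of $\phi$ and $u$ for a local parameter on $\Gamma$, the pullback $S\times_B\Gamma$ has local equation $\{xy=u^{2f}\}$, an $A_{2f-1}$ surface singularity; its minimal ($\sigma_S$-equivariant) resolution $\hat S$ is smooth, and its central fibre is a chain $C_0^S-C_1^S-\cdots-C_{2f}^S$ of $2f+1$ rational curves, with $\sigma_S$ reversing the chain via $C_i^S\leftrightarrow C_{2f-i}^S$ and fixing only the middle curve $C_f^S$, on which it acts as a genuine involution with two fixed points. The same applies to $T$. The crucial use of the hypothesis is here: the $2f$ nodes of the chain, indexed $0,\dots,2f-1$, are permuted by $i\mapsto 2f-1-i$, which has no fixed index precisely because $2f$ is even, so $\sigma_S$ (and likewise $\sigma_T$) acts \emph{freely} on the nodes. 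For odd ramification the middle node would be fixed, and the argument would fail.

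Now $\hat S\times_\Gamma\hat T$ is smooth except for ordinary double points sitting at (node of $\hat S)\times($node of $\hat T)$, and its central fibre is the union of the smooth rational surfaces $C_i^S\times C_j^T\cong\PP^1\times\PP^1$. Because $\sigma$ fixes no node of either chain, it permutes these double points freely, so each descends to an honest ordinary double point of $W/\sigma=R\times_B\Gamma$ (the quotient map being \'etale there), which I resolve by a standard blow-up with rational exceptional divisor. The remaining fixed locus of $\sigma$ is the transverse ``Kummer'' multisection, which over $\phi^{-1}(\Sigma)$ meets the central fibre in the four fixed points of $\sigma$ on the unique $\sigma$-stable product component $C_f^S\times C_f^T$; there $\sigma$ is the usual $\pm1$-type involution $\iota\times\iota$, the quotient $(\PP^1\times\PP^1)/(\iota\times\iota)$ has four $A_1$ singularities, and its minimal resolution is a smooth rational surface. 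Every other component $C_i^S\times C_j^T$ is swapped with the distinct component $C_{2f-i}^S\times C_{2f-j}^T$ and descends isomorphically to a copy of $\PP^1\times\PP^1$ (blown up where it meets the conifold resolutions). Thus all components of $Z_t$ for $t\in\phi^{-1}(\Sigma)$ are smooth rational surfaces, and gluing these local resolutions to the already smooth family over $\Gamma\setminus\phi^{-1}(\Sigma)$ yields the desired $Z$, following the pattern of \cite{ChenLewis4}.

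The hard part will be the third step: making the equivariant resolution of the ordinary double points and the subsequent quotient globally consistent --- choosing the resolutions compatibly on each $\sigma$-orbit and checking they glue across the distinct points of $\phi^{-1}(\Sigma)$ --- and verifying rigorously that the total central fibre can be arranged reduced with smooth, normal--crossing components, so that each irreducible piece is genuinely a smooth rational surface. The bookkeeping becomes heavier for $f>1$ (longer chains, more double points), and the delicate point is to confirm that no step reintroduces a $\sigma$-fixed double point or a non-rational quotient; the parity hypothesis is exactly what rules this out, so the essential content is to track the orbit structure of the nodes and the single fixed middle component through the resolution.
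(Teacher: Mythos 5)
Your overall strategy is the same as the paper's: localize at $\phi^{-1}(\Sigma)$, pull back and resolve $S$ and $T$ separately, use the parity of the ramification index to show the lifted involutions have no fixed points among the nodes of the central fibres, so that the $3$-fold double points $xy=zw=t$ of $\widehat S\times_\Gamma\widehat T$ descend untouched to the quotient, and then resolve. But your local model of the resolved factor is wrong, and this propagates. Since $S_{\phi(0)}$ is an \emph{irreducible} nodal rational curve, both branches at the node lie on the same component; hence the minimal resolution of the $A_{2f-1}$ point $xy=u^{2f}$ has central fibre a \emph{cycle} of $2f$ smooth rational curves (a Kodaira $I_{2f}$ fibre) --- the proper transform $C_0$ together with a chain of $2f-1$ exceptional curves whose two ends both attach to $C_0$ --- not a chain of $2f+1$ curves with swapped endpoints. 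The reflection induced by $\widehat\sigma_S$ on this cycle indeed fixes no node (this is exactly where evenness enters, as you say), but it fixes \emph{two} components, $C_0$ and $C_f$, each meeting two of the four fixed sections of $\widehat\sigma_S$; it does not fix ``only the middle curve.'' Consequently the sixteen fixed sections $P_i\times_\Gamma Q_j$ meet the central fibre of $\widehat S\times_\Gamma\widehat T$ in sixteen points distributed over the \emph{four} $\widehat\sigma$-stable components $C_a\times D_b$ with $a,b\in\{0,f\}$, not in four points on a unique component $C_f\times D_f$. The conclusion survives the correction --- each stable component is a $\PP^1\times\PP^1$ carrying an involution with four isolated fixed points whose quotient resolves to a smooth rational surface, and the freely swapped components descend isomorphically --- but as written your description of $\widehat S_0$ and of the fixed locus is incorrect and must be repaired.

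On the point you flag as the ``hard part'' (choosing equivariant small resolutions compatibly on each $\sigma$-orbit and gluing): the paper avoids this entirely by passing to the quotient $\widehat S\times_\Gamma\widehat T/\widehat\sigma$ first, identifying its singular locus (the images of the sixteen sections, which are transversal $A_1$ curves, plus the images of the isolated $3$-fold double points), and blowing up that singular locus --- a canonical, global operation requiring no choices. You may want to adopt that order of operations rather than resolving upstairs equivariantly.
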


The hypothesis on the ramification index in the above proposition can be easily met by a further finite base change $\Gamma'\to \Gamma$.
Assuming Proposition \ref{HDPEPROP000}, let us finish the proof of \eqref{HDPEE804}.

For $t\not\in \phi^{-1}(\Sigma)$, by \eqref{HDPEE802}, we have $\rank_\ZZ \Pic(Z_t) \le 19$. Therefore,
by our hypothesis \eqref{HDPEE801}, the map
\begin{equation}\label{HDPEE809}
\begin{tikzcd}
\displaystyle{\bigotimes_{i=1}^r \CH_\QQ^\bullet(Y_i) \otimes \CH_\QQ^\bullet (Z_t\times\prod_{i=r+2}^m Y_i)}
\ar[->>]{r}& \displaystyle{\CH_\QQ^{k-1}(\prod_{i=1}^r Y_i\times Z_t\times \prod_{i=r+2}^m Y_i)}
\end{tikzcd}
\end{equation}
is surjective for all $t\not\in \phi^{-1}(\Sigma)$. 

For $t\in \phi^{-1}(\Sigma)$, every irreducible component $P\subset Z_t$ is a smooth rational surface by Proposition \ref{HDPEPROP000}. The Chow groups of $P\times X$ have K\"unneth decomposition
\begin{equation}\label{HDPEE805}
\CH^\bullet(P\times X) = \CH^\bullet(P) \otimes \CH^\bullet(X)
\end{equation}
for every smooth rational projective surface $P$ and every smooth projective variety $X$.

Let $Y_{r+1}$ be a general Kummer surface. Choosing a finite morphism $g: Y_{r+1}\to \PP^2$, we have the diagram
$$
\begin{tikzcd}
\displaystyle{\bigotimes_{i=1}^r \CH_\QQ^\bullet(Y_i) \otimes \CH_\QQ^\bullet (\prod_{i=r+1}^m Y_i)} \ar[->>]{r} \ar[->>]{d}{g_*} & \displaystyle{\CH_\QQ^{k-1}(\prod_{i=1}^m Y_i)} \ar[->>]{d}{g_*}\\
\displaystyle{\bigotimes_{i=1}^r \CH_\QQ^\bullet(Y_i) \otimes \CH_\QQ^\bullet (\PP^2\times \prod_{i=r+2}^m Y_i)} \ar{r} & \displaystyle{\CH_\QQ^{k-1}(\prod_{i=1}^r Y_i\times \PP^2\times \prod_{i=r+2}^m Y_i)}
\end{tikzcd}
$$
Clearly, we see from the above diagram that its bottom row is also surjective. Combining this with the K\"unneth decomposition \eqref{HDPEE805}, we have surjections
\begin{equation}\label{HDPEE810}
\begin{tikzcd}
\displaystyle{\bigotimes_{i=1}^r \CH_\QQ^\bullet(Y_i) \otimes \CH_\QQ^\bullet (\prod_{i=r+2}^m Y_i)} \ar[->>]{r} & \displaystyle{\CH_\QQ^{d}(\prod_{i=1}^r Y_i\times \prod_{i=r+2}^m Y_i)}
\end{tikzcd}
\end{equation}
for $d = k-1,k-2,k-3$. Then we obtain the surjection
\begin{equation}\label{HDPEE806}
\begin{tikzcd}
\displaystyle{\bigotimes_{i=1}^r \CH_\QQ^\bullet(Y_i) \otimes \CH_\QQ^\bullet (P\times \prod_{i=r+2}^m Y_i)} \ar[->>]{r} & \displaystyle{\CH_\QQ^{k-1}(\prod_{i=1}^r Y_i\times P\times \prod_{i=r+2}^m Y_i)}
\end{tikzcd}
\end{equation}
from \eqref{HDPEE805} and \eqref{HDPEE810} for every irreducible component $P\subset Z_t$ and all $t\in \phi^{-1}(\Sigma)$.

Combining \eqref{HDPEE809} and \eqref{HDPEE806}, we see that the map \eqref{HDPEE903} is surjective in Theorem \ref{HDPETHM000} for every irreducible component $V$ of $W_t$ and all $t\in \Gamma$ with
$$
W = Z\times \prod_{i=r+2}^m Y_{i}.
$$
So we can apply the theorem and obtain a higher Chow class
$$
\eta\in \CH_\QQ^k(\prod_{i=1}^r Y_i \times Z \times \prod_{i=r+2}^m Y_i, 1)
$$
and pre-higher Chow cycles $\alpha_0, \alpha_1, ..., \alpha_{r}$ as in the theorem such that
$$
\eta - \sum_{i=0}^{r} \alpha_i = \xi
$$
on $Y_1\times ... \times Y_r \times Z_U\times Y_{r+2}\times ... \times Y_m$. For $\alpha_0, \alpha_1, ..., \alpha_{r}$ given in Theorem \ref{HDPETHM000}, it follows from the explicit regulator formula applied to the precycles that
$$
r_{k,1}(\eta_t) - r_{k,1}(\xi_t)\in \Big(\bigotimes_{i=1}^{r} H_{\mathrm{tr}}^2(Y_i) 
\otimes H_{\mathrm{tr}}^2(Z_t) \otimes H^{4m-2k-2r}(\prod_{i=r+2}^m Y_{i})\Big)^\perp
$$
for all $t\in U$. Then a standard monodromy argument shows that $r_{k,1}(\xi_t)$ is trivial for $t\in U$ general (see, for example, \cite{ChenLewis4}). We will sketch this argument at the end of this section.

It remains to prove Proposition \ref{HDPEPROP000}. This is achieved by finding an explicit resolution of the singularities of $R\times_B \Gamma$.

\begin{proof}[Proof of Proposition \ref{HDPEPROP000}]
The problem is local at every point $\phi^{-1}(\Sigma)$. Let us replace $\Gamma$ by a disk centered at a point $0\in \phi^{-1}(\Sigma)$. So $S\times_B \Gamma$ and $T\times_B \Gamma$ have singularities of type $xy = t^{2m}$ at the nodes of $S_{\phi(0)}$ and $T_{\phi(0)}$, respectively, where $2m$ is the ramification index of $\phi$ at $0$. Let $\widehat{S}$ and $\widehat{T}$ be the minimal resolution of $S\times_B \Gamma$ and $T\times_B \Gamma$, respectively.

The central fiber
$$
\widehat{S}_0 = C_0 \cup C_1 \cup C_2\cup ... \cup C_{2m-1}
$$
of $\widehat{S}/\Gamma$ is a union of $2m$ smooth rational curves of simple normal crossings whose dual graph is a circle, where $C_0$ is the proper transform of $S_{\phi(0)}$ and $C_i\cap C_{i+1} \ne\emptyset$ for $i=0,1,...,2m-1$ with $C_{2m} = C_0$.

It is easy to see that the involution $\sigma_S: S\to S$ lifts to an involution $\widehat{\sigma}_S: \widehat{S} \to \widehat{S}$ whose action on $\widehat{S}_0$ is given by
$$
\begin{aligned}
\widehat{\sigma}_S(C_0\cap C_1) &= C_{2m-1}\cap C_0,\ \widehat{\sigma}_S(C_1\cap C_2) = C_{2m-2}\cap C_{2m-1},\\
\widehat{\sigma}_S(C_2\cap C_3) &= C_{2m-3}\cap C_{2m-2},\ ..., \widehat{\sigma}_S(C_{m-1}\cap C_m) = C_{m}\cap C_{m+1}
\end{aligned}
$$
In the case of $m=1$, $\widehat{\sigma}_S$ switches the two intersections of $C_0$ and $C_1$. The fixed locus $\widehat{\sigma}_S$ consists of four disjoint sections $P_1, P_2, P_3, P_4$ of $\widehat{S}/\Gamma$ with $P_1$ and $P_2$ meeting $C_0$ and $P_3$ and $P_4$ meeting $C_m$.

The exact same holds for $\widehat{T}$:
$$
\widehat{T}_0 = D_0 \cup D_1 \cup D_2\cup ... \cup D_{2m-1}
$$
is a union of $2m$ smooth rational curves of simple normal crossings whose dual graph is a circle and the involution $\sigma_T: T\to T$ lifts to an involution $\widehat{\sigma}_T: \widehat{T} \to \widehat{T}$ whose fixed locus consists of four disjoint sections $Q_1, Q_2, Q_3, Q_4$ of $\widehat{T}/\Gamma$.

Let $\widehat{\sigma} = \widehat{\sigma}_S\times \widehat{\sigma}_T$ be the involution on $\widehat{S}\times_\Gamma \widehat{T}$.
Then the singular locus of $\widehat{S}\times_\Gamma \widehat{T} / \widehat{\sigma}$ consists of the images of the $16$ sections $P_i\times_\Gamma Q_j$ and $4m^2$ isolated points $(C_a\cap C_{a+1}) \times (D_b\cap D_{b+1})$. At each point among 
$$
(C_a\cap C_{a+1}) \times (D_b\cap D_{b+1}),
$$
$\widehat{\sigma}_S\times \widehat{\sigma}_T$ has a $3$-fold rational double point $xy = zw = t$; the same is true for $\widehat{S}\times_\Gamma \widehat{T} / \widehat{\sigma}$ at the images of $(C_a\cap C_{a+1}) \times (D_b\cap D_{b+1})$. So we can easily resolve the singularities of $\widehat{S}\times_\Gamma \widehat{T} / \widehat{\sigma}$ by blowing it up along its singular locus. Let $Z$ be the resulting blowup. Clearly, all components of $Z_0$ are smooth rational surfaces. So we have obtained a resolution of $R\times_B \Gamma$ with the required property via the diagram
$$
\begin{tikzcd}
Z\ar{r} \ar{dr} & \widehat{S}\times_\Gamma \widehat{T} / \widehat{\sigma} \ar{d}\\
& (S\times_B T / \sigma)\times_B \Gamma = R\times_B \Gamma
\end{tikzcd}
$$
\end{proof}

We will outline the monodromy argument. To set this up, suppose that we have a smooth projective family $Z/U$ of Kummer surfaces of maximal moduli over a smooth quasi-projective surface $U$ and a higher Chow class
$$
\xi\in \CH_\QQ^k(\prod_{i=1}^r Y_i \times Z \times \prod_{i=r+2}^m Y_i, 1).
$$
We want to show that $r_{k,1}(\xi_b)$ is trivial for $b\in U$ general.

Given our construction of the one parameter family $S\times_B T$, after a base change, we can find a morphism $C\to U$ from a smooth quasi-projective curve $C$ to $U$ whose image passing through a general point of $U$ with the following property. The one-parameter family $Z_C = Z\times_U C$ over $C$ can be extended to a family $Z_{\overline{C}}$ of Kummer surfaces over the completion $\overline{C}$ of $C$ such that $Z_{\overline{C}}$ is smooth and the pullback $\xi_C$ of $\xi$ to $Z_C$ can be extended to a higher Chow class $\eta_{\overline{C}}\in \CH_\QQ^k(Z_{\overline{C}}, 1)$ satisfying
\begin{equation}\label{HDPEE001}
r_{k,1}(\eta_t) - r_{k,1}(\xi_t)\in \Big(\bigotimes_{i=1}^{r} H_{\mathrm{tr}}^2(Y_i) 
\otimes H_{\mathrm{tr}}^2(Z_t) \otimes H^{4m-2k-2r}(\prod_{i=r+2}^m Y_{i})\Big)^\perp
\end{equation}
for all $t\in C$. Actually, \eqref{HDPEE001} holds for the full regulator $\cl_{k,1}$. That is,
\begin{equation}\label{HDPEE010}
\wcl_{k,1}(\eta_t) - \wcl_{k,1}(\xi_t)\in \Big(\bigotimes_{i=1}^{r} H_{\mathrm{tr}}^2(Y_i) 
\otimes H_{\mathrm{tr}}^2(Z_t) \otimes H^{4m-2k-2r}(\prod_{i=r+2}^m Y_{i})\Big)^\perp
\end{equation}
where $\wcl_{k,1}(\eta_t)$ and $\wcl_{k,1}(\xi_t)$ are local lifts of $\cl_{k,1}(\eta_t)$ and $\cl_{k,1}(\xi_t)$, respectively. The Gauss-Manin connection $\nabla$ on $Y_1\times ...\times Y_r\times Z_C\times Y_{r+2}\times ...\times Y_m/C$ acts on $\wcl_{k,1}(\eta_t)$ and 
$\wcl_{k,1}(\xi_t)$ (see, for example, \cite{CDKL}).

Let us fix a class
\begin{equation}\label{HDPEE003}
\omega \in \bigotimes_{i=1}^{r} H_{\mathrm{tr}}^2(Y_i) \otimes H^{4m-2k-2r}(\prod_{i=r+2}^m Y_{i}).
\end{equation}
Then by \eqref{HDPEE010},
\begin{equation}\label{HDPEE004}
\pi_*(\wcl_{k,1}(\eta_t)\wedge \omega - \wcl_{k,1}(\xi_t)\wedge \omega) \in H_{\mathrm{tr}}^2(Z_t)^\perp
\end{equation}
for all $t\in C$, where $\pi$ is the projection $Y_1\times ...\times Y_r\times Z\times Y_{r+2}\times ...\times Y_m\to Z$.

It follows from \eqref{HDPEE004} that
\begin{equation}\label{HDPEE005}
\nabla \big(\pi_*(\wcl_{k,1}(\eta_t)\wedge \omega - \wcl_{k,1}(\xi_t)\wedge \omega)\big) = 0
\end{equation}
for the Gauss-Manin connection $\nabla$ on $Z_C/C$. Since $\wcl_{k,1}(\eta_t)$ is the restriction of $\wcl_{k,1}(\eta)$ defined on the smooth projective variety $Z_{\overline{C}}$, we have
\begin{equation}\label{HDPEE006}
\nabla \big(\pi_*(\wcl_{k,1}(\eta_t)\wedge \omega)\big) = 0.
\end{equation}
Combining \eqref{HDPEE005} and \eqref{HDPEE006}, we obtain
\begin{equation}\label{HDPEE007}
\nabla \big(\pi_*(\wcl_{k,1}(\xi_t)\wedge \omega)\big) = 0
\end{equation}
on $Z_C/C$.

By our construction of $S\times_B T$, we can choose two such curves $C_i$ with two points $p_i\in C_i$ and maps $f_i:C_i\to U$ for $i=1,2$ satisfying that 
$f_1(p_1) = f_2(p_2) = b$ and the differential maps $df_i$ of $f_i$ on the tangent spaces of $C_i$ at $p_i$ satisfy that
\begin{equation}\label{HDPEE002}
\begin{tikzcd}
T_{C_1,p_1} \oplus T_{C_2,p_2} \ar[->>]{rr}{df_1\oplus df_2} & & T_{U,b}
\end{tikzcd}
\end{equation}
is surjective. By shrinking $U$, let us assume that \eqref{HDPEE002} holds for every $b\in U$.

Then by \eqref{HDPEE002}, we see that \eqref{HDPEE007} actually holds on $Z/U$. Namely,
\begin{equation}\label{HDPEE009}
\nabla \big(\pi_*(\wcl_{k,1}(\xi_b)\wedge \omega)\big) = 0
\end{equation}
on $Z/U$ for the Gauss-Manin connection $\nabla$ on $Z/U$. And since $Z/U$ is a complete family of Kummer surfaces, \eqref{HDPEE009} implies that
\begin{equation}\label{HDPEE008}
\pi_*(\wcl_{k,1}(\xi_b)\wedge \omega) \in H_{\mathrm{tr}}^2(Z_b)^\perp
\end{equation}
for all $b\in U$. And since \eqref{HDPEE008} holds for all $\omega$ in the space \eqref{HDPEE003}, we conclude that
$$
r_{k,1}(\xi_b)\in \Big(\bigotimes_{i=1}^{r} H_{\mathrm{tr}}^2(Y_i) 
\otimes H_{\mathrm{tr}}^2(Z_b) \otimes H^{4m-2k-2r}(\prod_{i=r+2}^m Y_{i})\Big)^\perp
$$
for all $b\in U$.

\section{Bloch-Beilinson Conjecture on Abel-Jacobi Maps}\label{HDPESECBBCAJ}

The following conjecture stated in \cite{Lew1}, can be thought of as
a variant of the Bloch-Beilinson conjecture:

\begin{Conjecture}\label{CONJ030}
Let $V/\ol{\QQ}$ be a
smooth quasiprojective variety. Then the Abel-Jacobi
map $\Phi_{k,\QQ}:\CH^{k}_{\hom}(V/\ol{\QQ};\QQ)
\to J^{k}(V(\CC))\otimes\QQ$ is injective.
\end{Conjecture}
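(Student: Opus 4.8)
The plan is to recognize at the outset that this statement is a form of the Bloch--Beilinson conjecture and is, at present, open in all codimensions $k \ge 2$; what follows is therefore a description of the strategy one would pursue, together with the precise point at which it breaks down, rather than a complete argument. The one case that is unconditionally known is $k=1$: here $\CH^1_{\mathrm{hom}}(V)$ is uniformized by the Picard variety of a smooth model of $V$, the intermediate Jacobian $J^1$ is the corresponding abelian variety, and $\Phi_{1,\QQ}$ is the classical Abel--Jacobi map---in fact an isomorphism, not merely injective. The entire substance of the conjecture lies in codimension $\ge 2$, where no such uniformizing abelian variety is available.

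The natural first reduction is to spread the field of definition out. Since $V$ is defined over $\ol{\QQ}$, every class in $\CH^k_{\mathrm{hom}}(V;\QQ)$ is already defined over a fixed number field $K$, and one wants to exploit the smallness of $K$. The guiding philosophy, going back to Bloch, is that over a number field the kernel of $\Phi_{k,\QQ}$ should be governed by the conjectural Bloch--Beilinson filtration $F^\bullet \CH^k_\QQ$, whose graded pieces are controlled by absolute Hodge cohomology, equivalently by extensions of mixed motives over $K$. Concretely, one would try to build a filtration whose first graded quotient is detected by the cycle class, whose second is detected by $\Phi_{k,\QQ}$, and whose deeper pieces are forced to vanish after $\otimes\,\QQ$ precisely because the base is $\ol{\QQ}$ and not $\CC$. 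Injectivity of $\Phi_{k,\QQ}$ is then equivalent to the assertion that this filtration has length $\le 2$, i.e. that $F^2 \CH^k_{\mathrm{hom}}(V/\ol{\QQ};\QQ) = 0$.

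What makes the $\ol{\QQ}$-hypothesis indispensable is Mumford's theorem: over $\CC$, for a surface with $p_g > 0$ the kernel of the Albanese--Jacobi map is infinite-dimensional, so the analogous statement is outright false over the complex numbers. The base change $\ol{\QQ} \hookrightarrow \CC$ must therefore annihilate, after $\otimes\,\QQ$, the whole of this transcendental kernel; the mechanism conjecturally responsible is a countability/finiteness input, namely the expected finite generation of the Chow groups of $\ol{\QQ}$-varieties (equivalently, semisimplicity of pure motives together with conservativity of the realization functors).

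The hard part---and the reason this remains a conjecture rather than a theorem---is exactly this last step: there is no known way to convert Hodge-theoretic or $\ell$-adic triviality of a cycle over $\ol{\QQ}$ into rational triviality. All the available tools (normal functions, Nori's connectivity theorem, the decomposition of the diagonal, and spreading-out/specialization) detect cycles only through their realizations, and the conjecture asserts precisely that for $\ol{\QQ}$-points the realization sees everything modulo torsion. Closing that gap would require, in effect, a full theory of mixed motives over number fields together with Beilinson--Soul\'e vanishing and conservativity---inputs that are themselves open. The honest conclusion is thus that the statement should be read as a conjecture whose proof reduces to, and is essentially equivalent to, these foundational unknowns, rather than as something accessible by the elementary geometric methods used elsewhere in this paper.
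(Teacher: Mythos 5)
You are correct that this statement is an open conjecture (a variant of the Bloch--Beilinson conjecture): the paper offers no proof of it, stating it only as a conjecture and using it as a hypothesis, via the filtration of Theorem \ref{THM031} (in particular part (vi), which says the conjecture forces $D^k(X)=\bigcap_\ell F^\ell=0$). Your decision not to manufacture a proof, and your account of the $k=1$ case, the role of Mumford's theorem in making the $\ol{\QQ}$ hypothesis essential, and the reduction to $F^2=0$ on the conjectural filtration, are all accurate and consistent with how the paper treats the statement.
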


Here the definition of the Abel-Jacobi map for
smooth quasiprojective varieties, which is an extension
of Griffiths' prescription, involves Carlson's
extension class interpretation of intermediate jacobians
(\cite{Ca}). A detailed description of this map 
for example can be
found in \cite[\S 9]{Ja2}.
We now make use of the following result: 

\begin{Theorem}\label{THM031}
{\rm (\cite{Lew1})}
Assume given a smooth  projective variety $X/\CC$.
Then for all $k$, there is a filtration
\[
\begin{split}
\CH^k(X;\QQ) = F^0 &\supset F^1\supset \cdots \supset F^{\ell}
\supset F^{\ell +1}\\
&\supset \cdots \supset F^k\supset F^{k+1}
= F^{k+2}=\cdots,
\end{split}
\]
which satisfies the following

\medskip

{\rm (i)} $F^1 = \CH^k_{\hom}(X;\QQ)$

\medskip

{\rm (ii)} $F^2 \subset \ker \Phi_{k,{\QQ}} : 
\CH^k_{\hom}(X;\QQ) \to J^k(X)\otimes\QQ$.

\medskip

{\rm (iii)} $F^{\ell}\bullet F^r \subset F^{\ell +r}$, where $\bullet$ is
the intersection product.

\medskip

{\rm (iv)} $F^{\ell}$ is preserved under push-forwards $f_{\ast}$ and
pull-backs $f^{\ast}$, where $f : X \to Y$
is a morphism of smooth projective varieties. [In short,
$F^{\ell}$ is preserved under the action of correspondences
between smooth projective varieties.]

\medskip

{\rm (v)} $\Gr_F^{\ell} :=  F^{\ell}/F^{\ell +1}$ factors through the
Grothendieck motive. More specifically,
let us assume that the K\"unneth components of the diagonal
class $[\Delta] = \bigoplus_{p+q = 2n}[\Delta(p,q)] \in
H^{2n}(X\times X,\QQ)$ are algebraic.  Then
$$
\Delta(2n-2k+r,2k-r)_{\ast}\biggl\vert_{\Gr_F^{\ell}\CH^k(X;\QQ)}
= \begin{cases} \text{\rm Identity}&\text{\rm if}\  r = \ell\\
0&\text{\rm otherwise}
\end{cases}
$$

\medskip 

{\rm (vi)} Let $D^k(X) := \bigcap_{\ell}F^{\ell}$. If Conjecture \ref{CONJ030}
above holds, then $D^k(X) = 0$.
\end{Theorem}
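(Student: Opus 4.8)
The plan is to build $F^\bullet$ by \emph{spreading} cycles over a base defined over $\ol{\QQ}$ and then cutting by the Leray filtration of the generalized Abel--Jacobi map. Fix a finitely generated field $\ol{\QQ}\subset k\subset\CC$ over which $X$ is defined, so $X=X_k\times_k\CC$. A class $\xi\in\CH^k(X;\QQ)$ has field of definition contained in some finitely generated extension of $\ol{\QQ}$, so it admits a spread: a smooth affine $\ol{\QQ}$-variety $S$, a smooth projective family $\pi:\mathcal X\to S$ restricting to $X$ over the generic geometric point, and a cycle $\widetilde\xi\in\CH^k(\mathcal X;\QQ)$ restricting to $\xi$. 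The Leray filtration $L^\bullet$ on $H^{2k}(\mathcal X,\QQ)$ has graded pieces fed by $H^j(S,R^{2k-j}\pi_\ast\QQ)$, and I would set $F^\ell\CH^k(X;\QQ)$ to be the classes $\xi$ for which the Leray level $<\ell$ part of the cycle-theoretic invariant of some (equivalently any) spread vanishes: the topological class in level $0$, and on homologically trivial pieces the higher generalized Abel--Jacobi invariants $\Phi_{k,\QQ}$ of the quasiprojective total space $\mathcal X/\ol{\QQ}$, read off through Carlson's extension-class picture (\cite{Ca,Ja2}).

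First I would establish well-definedness: two spreads of $\xi$ are comparable after a common refinement $S''\to S,S'$, under which the Leray filtration is strictly functorial, so the defining vanishing is independent of the spread. Properties (i) and (ii) then follow from identifying the lowest Leray levels: level $0$ is the fibrewise cycle class, giving $F^1=\CH^k_{\hom}(X;\QQ)$; level $1$ recovers Griffiths' invariant, so $F^2\subset\ker\Phi_{k,\QQ}$. Stabilization $F^{k+1}=F^{k+2}=\cdots$ comes from a weight/coniveau bound: a codimension-$k$ cycle carries no Leray data beyond level $k$, and hence $D^k(X)=\bigcap_\ell F^\ell=F^{k+1}$.

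Property (iv) is the functoriality of spreads together with the strictness of $L^\bullet$ under correspondences of smooth projective varieties, which I expect to be routine. The main obstacle is the multiplicativity (iii), $F^\ell\bullet F^r\subset F^{\ell+r}$: one must check that a product of spread cycles spreads the intersection product $\xi\bullet\xi'$ and that cup product respects Leray additively, $L^a\cup L^b\subset L^{a+b}$. The Leray cup-product estimate is standard, but arranging the two cycles over a common base and controlling base points and excess intersection so that the product genuinely represents $\xi\bullet\xi'$ is where the care lies.

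Finally, for (v), assuming the K\"unneth components $[\Delta(p,q)]$ are algebraic, I would identify $\Gr_F^\ell\CH^k(X;\QQ)$ with the image of the level-$\ell$ piece $H^\ell(S,R^{2k-\ell}\pi_\ast\QQ)$, governed fibrewise by $H^{2k-\ell}(X)$. The correspondence $\Delta(2n-2k+r,2k-r)_\ast$ is the K\"unneth projector onto $H^{2k-r}(X)$, hence acts as the identity on $\Gr_F^\ell$ exactly when $2k-r=2k-\ell$ (i.e. $r=\ell$) and as zero otherwise, which is the asserted formula. For (vi), a class in $D^k(X)=F^{k+1}$ has all Leray-graded invariants zero, so the generalized Abel--Jacobi invariant $\Phi_{k,\QQ}(\widetilde\xi)$ vanishes for the spread over $\ol{\QQ}$; Conjecture \ref{CONJ030} then forces $\widetilde\xi$, and therefore $\xi$, to be trivial, giving $D^k(X)=0$.
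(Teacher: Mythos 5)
The paper does not actually prove this theorem: it is imported verbatim from \cite{Lew1}, so there is no in-paper argument to compare against. Your outline does follow the strategy of that reference --- spread a cycle out over a base defined over $\ol{\QQ}$, and filter by the level in the Leray filtration at which the (generalized, Carlson/Jannsen-style) Abel--Jacobi invariant of the spread first survives --- so the architecture is the right one. But as a proof it is not complete, and the places where it is thin are exactly the places where the content of \cite{Lew1} lives.

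Concretely: (a) well-definedness is more delicate than ``compare two spreads after a common refinement,'' because the spread $\widetilde\xi$ over a fixed $S$ is only determined by $\xi$ up to cycles supported over proper closed subsets of $S$; one must pass to the direct limit over Zariski-shrinking $S$ and check that the Leray-graded invariants are compatible with restriction, and the Leray filtration over an open base must be controlled by weights (Deligne's degeneration/strictness) to make ``level $<\ell$ part vanishes'' meaningful. (b) Property (iii) is asserted, not proved: the estimate $L^a\cup L^b\subset L^{a+b}$ for a single fibration is standard, but intersecting two cycles spread over different bases requires an external product over a common base followed by restriction to the diagonal, and showing this computes $\xi\bullet\xi'$ while preserving the filtration is the technical heart of the construction; you flag it as ``where the care lies'' and then stop. (c) Property (iv) is not ``routine'': $f_*$ is a Gysin map that shifts cohomological degree, and compatibility of the Leray filtration with Gysin maps and correspondences needs polarization/hard Lefschetz input. (d) In (v), the identification of $\Gr_F^{\ell}$ with a piece governed fibrewise by $H^{2k-\ell}(X)$ is precisely what has to be established before the K\"unneth projector computation becomes the one-line statement you give. (e) In (vi), the stabilization $F^{k+1}=F^{k+2}=\cdots$ and the claim that a class in $D^k(X)$ has vanishing full Abel--Jacobi invariant on the quasiprojective total space both rest on the Hodge-type argument that a $(k,k)$-class has no Leray components in levels $>k$; this should be spelled out before Conjecture \ref{CONJ030} can be invoked. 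In short: right blueprint, but the load-bearing verifications are deferred rather than carried out, so this should be presented as a reading guide to \cite{Lew1} rather than as an independent proof.
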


Using Theorem \ref{THM031}, it was proved in \cite[Lemma 3.2]{ChenLewis4} that if Conjecture \ref{CONJ030} holds, $\CH_\QQ^2(X\times Y)$ has K\"unneth decomposition
for a product $X\times Y$ of two smooth projective surfaces satisfying $H^1(X) = H^1(Y) = 0$ and
\begin{equation}\label{HDPEE807}
(H^2(X, \QQ)\otimes H^2(Y,\QQ)) \cap H^{2,2}(X\times Y) = H^{1,1}(X, \QQ) \otimes H^{1,1}(Y, \QQ).
\end{equation}
Let us verify \eqref{HDPEE807} for a very general Kummer surface $X$ and a Kummer surface $Y$ with $\rank_\ZZ \Pic(Y)\le 19$. Actually, we have

\begin{Proposition}\label{HDPEPROP001}
Let $\pi: X\to B$ be a non-isotrivial smooth family of K3 surfaces over a smooth variety $B$ and let $Y$ be a smooth K3 surface. Then
$$
(H^2(X_b, \QQ)\otimes H^2(Y,\QQ)) \cap H^{2,2}(X_b\times Y) = H^{1,1}(X_b, \QQ) \otimes H^{1,1}(Y, \QQ)
$$
for $b\in B$ very general.
In particular, the identity \eqref{HDPEE807} holds for the product of a very general Kummer surface and an arbitrary smooth K3 surface.
\end{Proposition}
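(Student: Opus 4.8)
The plan is to split $H^2(X_b,\QQ)$ and $H^2(Y,\QQ)$ into algebraic and transcendental parts, expand the tensor product into four summands, and show that after intersecting with $H^{2,2}$ only the purely algebraic summand survives; the genericity of $b$ is needed solely to kill the ``transcendental $\otimes$ transcendental'' summand. Concretely, I would write $H^2(X_b,\QQ)=\NS(X_b)_\QQ\oplus T_{X_b}$ and $H^2(Y,\QQ)=\NS(Y)_\QQ\oplus T_Y$, the orthogonal decompositions of rational Hodge structures into the algebraic part $\NS(\,\cdot\,)_\QQ=H^{1,1}(\,\cdot\,,\QQ)$ and the transcendental lattice $T_{(\cdot)}=\NS(\,\cdot\,)_\QQ^{\perp}$. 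The summand $\NS(X_b)_\QQ\otimes\NS(Y)_\QQ$ is of pure type $(1,1)\otimes(1,1)$, hence lies entirely in $H^{2,2}$, and is exactly the right-hand side $H^{1,1}(X_b,\QQ)\otimes H^{1,1}(Y,\QQ)$. Since $\NS(X_b)_\QQ\cong\QQ(-1)^{\oplus\rho}$ as a Hodge structure (with $\rho=\rank\NS(X_b)$), the mixed summand $\NS(X_b)_\QQ\otimes T_Y\cong T_Y(-1)^{\oplus\rho}$ has its rational $(2,2)$-classes in bijection with the rational $(1,1)$-classes of $T_Y$, of which there are none because $T_Y\cap H^{1,1}(Y,\QQ)=0$; symmetrically for $T_{X_b}\otimes\NS(Y)_\QQ$. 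So everything reduces to showing that $T_{X_b}\otimes T_Y$ carries no nonzero rational class of type $(2,2)$ for very general $b$.

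Next I would reinterpret such a class as a morphism. The polarization gives $T_{X_b}\cong T_{X_b}^{\vee}(-2)$, so $T_{X_b}\otimes T_Y\cong\Hom(T_{X_b},T_Y)(-2)$, and a rational $(2,2)$-class is the same as a morphism of rational Hodge structures $\phi\colon T_{X_b}\to T_Y$. The transcendental lattice of any K3 surface is an \emph{irreducible} rational Hodge structure: a nonzero sub-Hodge-structure $W$ has $W^{2,0}=H^{2,0}$ (otherwise $W$ is of pure type $(1,1)$, so $W\subset\NS_\QQ$, contradicting $T\cap\NS_\QQ=0$), and applying this to both $W$ and its orthogonal complement forces $W$ to be $0$ or all of $T$. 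As $T_{X_b}$ and $T_Y$ are both irreducible, Schur's lemma shows every nonzero $\phi$ is an isomorphism; hence it suffices to prove that $T_{X_b}\not\cong T_Y$ as rational Hodge structures for very general $b$.

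The hard part is this last reduction, and it is the only place non-isotriviality enters. For very general $b$ the rank of $T_{X_b}$ attains its maximal value $t$ in the family, the complement of this locus $B^{\circ}$ being a countable union of proper analytic subvarieties; if $t\ne\dim_\QQ T_Y$ we are already done, so assume $t=\dim_\QQ T_Y$ and identify the underlying $\QQ$-vector space of $T_{X_b}$ with a fixed $V$ via a trivialization of the transcendental local system on the universal cover of $B^{\circ}$. The period map $\mathcal{P}$ sends $b$ to the line $H^{2,0}(X_b)\subset V_\CC$, and it is \emph{non-constant}: on $B^{\circ}$ the algebraic part is the locally constant fixed part, so the family being non-isotrivial forces all Hodge-theoretic variation of $X_b$ (hence, by the Torelli theorem, all variation of $X_b$ itself) to occur in the transcendental period. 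The key observation is that two Hodge structures on $V$ are isomorphic exactly when conjugate by an element of $GL(V)(\QQ)$, so the set of Hodge structures on $V$ isomorphic to the fixed $T_Y$ corresponds inside the period domain $\Omega\subset\PP(V_\CC)$ to the \emph{countable} family of lines $\{\,g\cdot H^{2,0}(Y):g\in GL(V)(\QQ)\,\}$ (countable because $GL_t(\QQ)$ is). Consequently $\{\,b\in B^{\circ}:T_{X_b}\cong T_Y\,\}=\mathcal{P}^{-1}(\text{a countable set of points})$ is a countable union of proper analytic subvarieties of $B^{\circ}$, which a very general $b$ avoids together with the rank-jumping locus.

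This proves the displayed identity; the main obstacle, as indicated, is the non-constancy of $\mathcal{P}$ and the countability of the isomorphism locus in the third paragraph, all of which is powered by the non-isotriviality hypothesis. Finally, the ``in particular'' claim follows by applying the identity to any non-isotrivial family of Kummer surfaces passing through the given very general Kummer surface, with the arbitrary smooth K3 surface $Y$ playing the role of the fixed factor, so that \eqref{HDPEE807} holds for that product.
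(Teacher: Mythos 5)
Your argument is correct, and it reaches the conclusion by a genuinely different route than the paper. The common starting point is the reduction to showing that $H^{1,1}(X_b,\QQ)^\perp\otimes H^{1,1}(Y,\QQ)^\perp=T_{X_b}\otimes T_Y$ carries no nonzero rational $(2,2)$-class for very general $b$, which is the paper's \eqref{HDPEE808}; your elimination of the two mixed summands via $\NS(X_b)_\QQ\cong\QQ(-1)^{\oplus\rho}$ is exactly the justification the paper leaves implicit. From there the paper argues infinitesimally: a failure on a dense set of $b$ yields, after shrinking to a polydisk, a flat rational section $\eta$ of $(R^2\pi_*\QQ)_{\mathrm{tr}}\otimes H^{1,1}(Y,\QQ)^\perp$ of type $(2,2)$ everywhere, and differentiating the identity $\langle\eta,\gamma\otimes\omega_Y\rangle=0$ with the Gauss--Manin connection, together with the nonvanishing Kodaira--Spencer map and the infinitesimal Torelli property of K3 surfaces, forces $\eta_b\perp H^2(X_b)\otimes H^{2,0}(Y)$ and hence $\eta_b=0$. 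You instead argue globally: irreducibility of the transcendental Hodge structure of a K3 surface plus Schur's lemma identifies the bad locus as exactly $\{b:\,T_{X_b}\cong T_Y\}$, which lies over the countable $\mathrm{GL}(V)(\QQ)$-orbit of the period point of $Y$ and is therefore a countable union of proper analytic subsets once the period map is non-constant. Your route is more structural --- it pinpoints precisely when a Hodge class can occur, and makes transparent why $Y$ must be fixed before $b$ is chosen (for $Y\cong X_b$ the identity genuinely fails) --- at the price of invoking irreducibility of the transcendental lattice and Torelli; the paper's infinitesimal computation is more self-contained and uses only local data of the variation. Two minor points to tighten: the non-constancy of the transcendental period map already follows from local Torelli for K3 surfaces, so the global Torelli theorem is not needed; and the fibres of the multivalued period map are best handled on polydisks trivializing the local system, where constancy of the period map on a single fibre would propagate to all of $B$ by the identity theorem and contradict non-isotriviality.
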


\begin{proof}
It suffices to prove
\begin{equation}\label{HDPEE808}
(H^{1,1}(X_b, \QQ)^\perp \otimes H^{1,1}(Y, \QQ)^\perp) \cap H^{2,2}(X_b\times Y) = 0
\end{equation}
for $b\in B$ very general,
where $H^{1,1}(X_b, \QQ)^\perp$ and $H^{1,1}(Y, \QQ)^\perp$ are the orthogonal complements of $H^{1,1}(X_b, \QQ)$ and $H^{1,1}(Y, \QQ)$ in $H^2(X_b, \QQ)$ and $H^2(Y, \QQ)$, respectively.

We may take $B$ to be a polydisk and assume that the Kodaira-Spencer map
$$
\begin{tikzcd}
T_{B,b} \ar{r} & H^1(T_{X_b})
\end{tikzcd}
$$
is nonzero at all $b\in B$.

If \eqref{HDPEE808} fails, after shrinking $B$, there exists 
$$
\eta\in H^0(B, (R^2\pi_* \QQ)_{\mathrm{tr}})\otimes H^{1,1}(Y, \QQ)^\perp
$$
such that
$$ \eta_b\ne 0 \in H^{2,2}(X_b\times Y) $$
for all $b\in B$, where $(R^2\pi_* \QQ)_{\mathrm{tr}}$ is the subsheaf of $R^2\pi_* \QQ$
orthogonal to the relative algebraic cycles of $X/B$.

Since $\eta_b$ is orthogonal to
$$
F^1 H^2(X_b) \otimes H^{2,0}(Y) = (H^{1,1}(X_b) \oplus H^{2,0}(X_b)) \otimes H^{2,0}(Y),
$$
we have
$$
\langle \eta, \gamma\otimes \omega_Y\rangle = 0
$$
for all $\gamma\in H^0(B, F^1 R^2\pi_* \CC)$, where $\omega_Y \in H^{2,0}(Y)$ is a nonvanishing holomorphic $2$-form  on $Y$. Applying the Gauss-Manin connection, we obtain
$$
\langle \eta, \nabla \gamma \otimes \omega_Y \rangle = 0
$$
where we observe that $\nabla \eta = 0$. Since the Kodaira-Spencer map of $\pi$ is nonzero, we have
$$
\nabla (F^1 R^2\pi_* \CC) \not\subset F^1 R^2\pi_* \CC \otimes \Omega_B
$$
due to the fact that the pairing $H^{1,1}(X_b) \otimes H^1(T_{X_b}) \to H^{0,2}(X_b)$ is nondegenerate.
Thus, we conclude
$$
\langle \eta_b, \xi_b \otimes \omega_Y \rangle = 0
$$
for all $\xi_b \in H^2(X_b)$ and $b\in B$. That is,
$$
\eta_b \in (H^2(X_b) \otimes H^{2,0}(Y))^\perp.
$$
But we know that
$$
\begin{aligned}
&\quad (H^{1,1}(X_b, \QQ)^\perp \otimes H^{1,1}(Y, \QQ)^\perp) \cap (H^2(X_b) \otimes H^{2,0}(Y))^\perp
\\
&= H^{1,1}(X_b, \QQ)^\perp \otimes (H^{1,1}(Y, \QQ)^\perp \cap H^{1,1}(Y))
\\
&= H^{1,1}(X_b, \QQ)^\perp \otimes (H^{1,1}(Y, \QQ)^\perp \cap H^{1,1}(Y, \QQ)) = 0.
\end{aligned}
$$
This leads to $\eta_b = 0$, which is a contradiction.
\end{proof}

Combining the above proposition and \cite[Lemma 3.2]{ChenLewis4}, we are able to apply Theorem \ref{HDPETHMMAIN} to the case $(k,r,m,n) = (3,1,2,4)$ and conclude that
the Hodge-$\calD$-conjecture fails for the real regulator $r_{3,1}$ on a product of four very general elliptic curves, if the Bloch-Beilinson Conjecture \ref{CONJ030} holds.

\end{document}